\pdfoutput=1

%
%

\documentclass[12pt]{amsart}

\usepackage[bookmarks=true,%
    colorlinks=true,%
    linkcolor=blue,%
    citecolor=blue,%
    filecolor=blue,%
    menucolor=blue,%
    urlcolor=blue,%
    breaklinks=true]{hyperref}

\usepackage{bm}
\usepackage{amsthm, cleveref, amssymb, enumerate, multirow}
\usepackage{graphicx}
\usepackage{tikz-cd}

\newcommand{\Or}{\mathrm{O}}
\newcommand{\SO}{\mathrm{SO}}
\newcommand{\Vol}{\mathrm{Vol}}
\newcommand{\ab}{\mathrm{ab}}

\renewcommand{\H}{\mathbb{H}}

\newcommand{\Z}{\mathbb{Z}}
\newcommand{\R}{\mathbb{R}}
\newcommand{\PSL}{\mathrm{PSL}}
\newcommand{\Tor}{\mathrm{Tor}}

\newcommand{\PGL}{\mathrm{PGL}}
\newcommand{\GL}{\mathrm{GL}}

\newtheorem{theorem}{Theorem}[section]
\newtheorem{lemma}[theorem]{Lemma}
\newtheorem{corollary}[theorem]{Corollary}
\newtheorem{prop}[theorem]{Proposition}

\newtheorem{rem}[theorem]{Remark}
\newtheorem{quest}[theorem]{Question}

\crefname{theorem}{Theorem}{Theorems}
\crefname{lemma}{Lemma}{Lemmas}
\crefname{corollary}{Corollary}{Corollaries}
\crefname{prop}{Proposition}{Propositions}
\crefname{conj}{Conjecture}{Conjectures}
\crefname{rem}{Remark}{Remarks}
\crefname{quest}{Question}{Questions}
\crefname{section}{Section}{Sections}
\crefname{equation}{Equation}{Equations}
\crefname{figure}{Figure}{Figures}

\usepackage{color}

\title[3-manifolds in hyperbolic link complements of tori in $S^4$]{Totally geodesic hyperbolic 3-manifolds in hyperbolic link complements of tori in $S^4$}
\author{M. Chu}
\author{A. W. Reid}

\address{\newline
School of Mathematics,\newline
University of Minnesota,\newline
Minneapolis, MN 55455, USA.}
\email{mchu@umn.edu}
\address{\newline
Department of Mathematics,\newline
Rice University,\newline
Houston, TX 77005, USA}
\email{alan.reid@rice.edu}

\thanks{The first author is supported by NSF grant DMS $2243188$.}

\begin{document}

\begin{abstract}
In this paper we prove that certain hyperbolic link complements of $2$-tori in $S^4$ do not contain closed embedded totally geodesic hyperbolic $3$-manifolds.
\end{abstract}


\keywords{cusped hyperbolic manifold, arithmetic manifold, totally geodesic submanifold}

\maketitle

%
%
%
%

\section{Introduction}
\label{intro}

A classical problem in $4$-dimensional topology is (see \cite[Problem 3.20]{Kir}): {\em Under what conditions does a closed, orientable 3-manifold $M$ smoothly embed in $S^4$?} As an example of an obstruction, it is an old result 
of Hantzsche \cite{Han} that if a closed orientable 3-manifold $M$ embeds in $S^4$, then 
$\Tor(H_1(M,\Z))\cong A\oplus A$ for some finite abelian group $A$. The focus of this paper is obstructing the embedding of closed hyperbolic $3$-manifolds in $S^4$ via embeddings in hyperbolic link complements of $2$-tori in $S^4$.

A simple but elegant argument (see \cite[Proposition 4.10]{I0}) shows that if $X$ is a hyperbolic link complement of $2$-tori in $S^4$ then $\chi(X)=\chi(S^4)=2$, and so
there are only finitely many hyperbolic link complements of $2$-tori in $S^4$. This finiteness statement holds more generally for hyperbolic link complements of $2$-tori and Klein bottles in any fixed $4$-manifold.
By way of comparison, Thurston's hyperbolization theorem shows that many links in $S^3$ have hyperbolic complements, and although it is known that many hyperbolic link complements in $S^3$ do not contain a closed embedded totally geodesic surface (e.g. alternating links \cite{MeR}), examples do exist (see \cite{Lei} and \cite{MeR}).  The main result of this paper (see Theorem \ref{main} below) provides more
examples of hyperbolic link complements of $2$-tori in $S^4$
that do not contain a closed embedded totally geodesic hyperbolic $3$-manifold (our previous paper \cite{CR} provided one such example).  We note that \cite{CR} shows that the hyperbolic link complements of $2$-tori in $S^4$ in Theorem \ref{main}
do contain infinitely many immersed closed totally geodesic hyperbolic 3-manifolds. To state Theorem \ref{main} we need to recall some additional notation.

In \cite{RT} the authors provide a census of $1171$ so-called integral congruence two hyperbolic $4$-manifolds that are all obtained from face-pairings of the ideal $24$-cell in $\mathbb{H}^4$. These are all commensurable cusped, arithmetic, hyperbolic 4-manifolds of Euler characteristic $1$ (i.e. minimal volume). In \cite{I} the author provides an example of a cusped, orientable, hyperbolic 4-manifold of Euler characteristic $2$ that is the complement of five $2$-tori in $S^4$ (with the standard smooth structure \cite{I2}) and is constructed as the orientable double cover of the non-orientable manifold $1011$ in the census of integral congruence two hyperbolic $4$-manifolds mentioned above. In \cite{CR} we proved that this link complement of $2$-tori in $S^4$  
does not contain any closed embedded totally geodesic hyperbolic $3$-manifolds (it does contain embedded orientable non-compact finite volume totally geodesic hyperbolic $3$-manifolds).  In \cite{IRT}, four additional examples of link complements of $2$-tori in manifolds homeomorphic to $S^4$ were found. These arise as the orientable double covers of the non-orientable manifolds in the census of \cite{RT} with numbers $23$, $71$, $1091$ and $1092$.  The main result of this note is to extend the result of \cite{CR} to these four other examples. 
 
We fix the following notation. For $n\in \{23,71,1091,1092\}$, we denote by $p_n:W_n\rightarrow N_n$ the orientation double coverings of the non-orientable integral congruence two hyperbolic $4$-manifolds $N_n$. 
By construction, $\chi(N_n)=1$ and $\chi(W_n)=2$, with $W_n$ a link complement of $2$-tori in $S^4$.

\begin{theorem}
\label{main}
For $n\in \{23,71,1091,1092\}$ the manifolds $W_n$  do not contain a closed embedded totally geodesic hyperbolic $3$-manifold.
\end{theorem}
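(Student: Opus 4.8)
The plan is to use the geometry of the embedding into $S^4$ together with a volume comparison to reduce to finitely many candidate totally geodesic hyperplanes, and then to eliminate these by explicit computation in $\pi_1(W_n)$.

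First I would fix the arithmetic. Since the $N_n$, and hence the $W_n$, are commensurable with the Ratcliffe--Tschantz census manifolds, $\Gamma_n := \pi_1(W_n)$ is a finite-index subgroup of (a conjugate of) the integral automorphism group of the quadratic form $q$ of signature $(4,1)$ attached to the ideal $24$-cell, and explicit matrix generators of $\Gamma_n \le \mathrm{PO}(q,\Q)$ can be read off from the side-pairings of $N_n$ recorded in \cite{RT} and \cite{IRT} together with the double cover $p_n$. A closed, embedded, totally geodesic hyperbolic $3$-manifold $V \subset W_n$, which we may assume connected, lifts to a geodesic hyperplane $\H^3 = v^{\perp} \subset \H^4$ with $v \in \Z^5$ primitive and $q(v) > 0$; then $\pi_1(V) = \mathrm{Stab}_{\Gamma_n}(v^{\perp})$, a group commensurable with the integral automorphism group of the rank-$4$ form $q|_{v^{\perp}}$, and $V$ is closed exactly when $q|_{v^{\perp}}$ is anisotropic over $\Q$.

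Next I would cut $W_n$ along $V$. Since $H_3(S^4;\Z) = 0$ the connected closed hypersurface $V$ is null-homologous, hence separating, in $S^4$, so it separates $W_n$; moreover $V$ is orientable, because a closed hypersurface of $S^4$ is orientable by Alexander duality, which already excludes every $v^{\perp}$ for which $\mathrm{Stab}_{\Gamma_n}(v^{\perp})$ reverses the transverse orientation. The completions of the two pieces of $W_n \setminus V$ are finite-volume hyperbolic $4$-manifolds $\widehat{Y}_1, \widehat{Y}_2$ with totally geodesic boundary $V$ and toroidal cusps; doubling $\widehat{Y}_i$ across $V$ produces a finite-volume hyperbolic $4$-manifold, so by Gauss--Bonnet $\chi(\widehat{Y}_i) = \frac{3}{4\pi^2}\Vol(\widehat{Y}_i)$ is a positive integer, and $\chi(\widehat{Y}_1) + \chi(\widehat{Y}_2) = \chi(W_n) = 2$ forces $\chi(\widehat{Y}_i) = 1$, i.e. $\Vol(\widehat{Y}_i) = \frac{4\pi^2}{3}$. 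Now, since $V$ is embedded, for every $\gamma \in \Gamma_n$ with $\gamma v \ne \pm v$ the hyperplanes $v^{\perp}$ and $\gamma v^{\perp}$ are disjoint, and because $q$ is integral on $\Z^5$ this forces $\cosh d(v^{\perp},\gamma v^{\perp})$ to exceed $1$ by at least $c/q(v)$ for an absolute $c > 0$; hence $V$ has an embedded collar in $\widehat{Y}_i$ of width $\gg q(v)^{-1/2}$, and comparing the collar volume with $\Vol(\widehat{Y}_i)$ gives $\Vol(V) \ll q(v)^{1/2}$. On the other hand $v^{\perp}\cap\Z^5$ is a saturated rank-$4$ sublattice of discriminant $\asymp q(v)$, so the volume formula for arithmetic hyperbolic $3$-manifolds gives $\Vol(V) \gg q(v)^{1-\epsilon}$; together with the previous bound this forces an absolute bound on $q(v)$, so up to the $\Gamma_n$-action there are only finitely many candidate hyperplanes $v^{\perp}$, which can be enumerated explicitly from the generators of $\Gamma_n$.

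Finally I would eliminate the surviving candidates directly. The image of $v^{\perp}$ in $W_n$ is embedded if and only if every $\gamma \in \Gamma_n$ satisfies $\gamma v = \pm v$ or $|\langle v,\gamma v\rangle_q| \ge q(v)$; so for each candidate it suffices to produce $\gamma \in \Gamma_n$ with $\gamma v \ne \pm v$ and $|\langle v,\gamma v\rangle_q| < q(v)$, since such a $\gamma$ yields a transverse self-intersection of the image and shows it is not embedded. A search through bounded-length words in the generators of $\Gamma_n$ should produce such a $\gamma$ in every case, for each $n \in \{23,71,1091,1092\}$, completing the proof. I expect the main obstacle to be the middle step: making the volume estimates, and the local analysis deciding when $q|_{v^{\perp}}$ is anisotropic, effective enough that the resulting list of candidate hyperplanes is genuinely finite and short enough that the concluding verification --- though lengthy, and presumably computer-assisted --- remains feasible uniformly over the four manifolds.
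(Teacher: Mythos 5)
Your proposal takes a genuinely different route from the paper, and as written it has a gap that keeps it at the level of a program rather than a proof. The critical missing step is the arithmetic lower bound $\Vol(V) \gg q(v)^{1-\epsilon}$. Your collar argument (correctly) only yields $\Vol(V) \ll q(v)^{1/2}$, so you need a volume lower bound with exponent strictly greater than $1/2$ \emph{and} with an explicit constant in order to bound $q(v)$ and produce a finite, enumerable list of candidate hyperplanes. You assert this via ``the volume formula for arithmetic hyperbolic $3$-manifolds,'' but since $\pi_1(V)=\mathrm{Stab}_{\Gamma_n}(v^{\perp})$ is a priori only a subgroup of the integral orthogonal group of the quaternary form $q|_{v^{\perp}\cap\Z^5}$, what is actually required is an effective lower bound for the covolume of $\SO(f,\Z)$ in terms of $\det f$ for anisotropic $f$ of signature $(3,1)$; this needs the mass formula or Prasad's volume formula with all local densities at primes dividing the determinant controlled, and the resulting exponent and constant are precisely what determine whether the scheme closes at all and whether the list of surviving $v$ is short enough to check. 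You flag this yourself as ``the main obstacle,'' and it is: without it no finite list is produced, and the concluding elimination (finding, for each surviving $v$, an explicit $\gamma\in\Gamma_n$ with $\gamma v\neq\pm v$ and $|\langle v,\gamma v\rangle|<q(v)$) is also not carried out, with no a priori guarantee that a bounded-length word search in your chosen generators succeeds. The preliminary reductions (orientability and separation from $H_*(S^4)$, anisotropy of $q|_{v^\perp}$ characterizing closedness, $\chi(\widehat{Y}_i)=1$ for the two pieces) are fine.

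For comparison, the paper sidesteps all effective arithmetic in dimension $4$ by exploiting the special symmetry of these examples: $W_n$ is a regular cover of $\H^4/\Lambda(2)$, so if a closed embedded totally geodesic $M$ missed the preimages of all four coordinate cross-sections, the $(\Z/2\Z)^4$ of coordinate reflections would produce $16$ disjoint embedded separating copies of $M$, and Basmajian's collar volume bound together with Gauss--Bonnet gives $\frac{8\pi^2}{3}\approx 26.3 \geq 16\mathcal{V}'(0.94)\approx 28.9$, a contradiction. Hence $M$ meets a cross-section; by \cite[Lemma 3.2]{CR} it cannot meet an orientable one, so it cuts a non-orientable cross-section in a closed embedded totally geodesic surface. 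The problem then drops to dimension $3$, where $\pi_1(Y^+)$ is pinned between $\Gamma(2+2i)$ and $\Gamma(1+i)$ in $\PSL(2,\Z[i])$, the Maclachlan--Reid classification of invariant circles forces the candidate surface to have discriminant $6$ (circle $\mathcal{C}_{6,3}$ up to equivalence), and the Jung--Reid trace criterion is verified against six explicit Hermitian forms. If you want to rescue your approach, the step worth importing is this reduction to the cross-sections: it replaces your hard effective $4$-dimensional counting problem by a completely understood $3$-dimensional one.
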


As with the case of the manifold $1011$ of \cite{RT}, the $W_n$ of Theorem \ref{main} all contain embedded non-compact finite volume totally geodesic hyperbolic $3$-manifolds.

The strategy of the proof of Theorem \ref{main} is similar to that of \cite{CR} but additional complications arise with these four examples (see \S \ref{tubular} for a fuller discussion). Moreover, a different argument is needed to handle these cases, and this requires a detailed analysis of possible closed totally geodesic surfaces that can embed in certain arithmetic hyperbolic $3$-manifolds that cover the Picard orbifold $\H^3/\PSL(2,\Z[i])$.

We finish the Introduction by posing a question prompted by our work:

\begin{quest}\label{question} Does there exist a hyperbolic link complement of $2$-tori in a closed (smooth) simply connected $4$-manifold that contains a closed embedded totally geodesic hyperbolic $3$-manifold?\end{quest}

Examples of hyperbolic link complements of $2$-tori in closed (smooth) simply connected $4$-manifolds are given in \cite{IRT} and \cite{Sar}. Indeed, \cite[Theorem 1.2]{Sar} shows that such link complements exist only in $S^4$, $\#_{r}(S^2\times S^2)$, or $\#_r (\mathbb{CP}^2 \# \overline{\mathbb{CP}^2})$, with $r>0$.  Furthermore, using the examples of \cite{I}, examples of link complements of $2$-tori in $\#_{r}(S^2\times S^2)$ for $r$ even were exhibited in \cite{Sar} (these cover the link complement of \cite{I}). It is unknown whether there exists a finite volume hyperbolic link complement of $2$-tori in $\#_r (\mathbb{CP}^2 \# \overline{\mathbb{CP}^2})$, for some $r>0$.

\section*{Acknowledgements}
{\em This work began whilst the authors were both visiting the Institut de Math\'ematiques, Universit\'e de Neuch\^atel, and we would like to thank the Insititut for its hospitality. We also wish to thank  A. Kolpakov, S. Riolo and L. Slavich for many helpful conversations on topics related to this work. We are particularly grateful to a referee who spotted some errors in the calculations in \S \ref{Fuchsian} and \ref{ss1092} of the first version of this paper, and made several helpful clarifying comments and suggestions.
}

\section{Recap from \texorpdfstring{\cite{CR}}{}}\label{recap}

The hyperboloid model of ${\mathbb H}^4$ is defined using the quadratic form $J = x_1^2+x_2^2+x_3^2+x_4^2-x_5^2$ with
\[{\mathbb H}^4 = \{x \in {\mathbb R}^{5} :J(x)= -1, x_{5}>0\}\]
equipped with the Riemannian metric induced from the Lorentzian inner product associated to $J$. The full group of isometries of ${\mathbb H}^4$ is then identified with $\Or^+(4,1)$, the subgroup of
\[
\Or(4,1) = \{A \in \GL(5,\mathbb{R}) : A^tJA = J\},
\]
preserving the upper sheet of the hyperboloid $J(x)=-1$, and where we abuse notation and use $J$ to denote
the symmetric matrix associated to the quadratic form. The full group of orientation-preserving isometries is given by $\SO^+(4,1) = \{A\in \Or^+(4,1) : \det(A)= 1\}$.

The groups $\Or^+(3,1)$ and $\SO^+(3,1)$ are defined in a similar manner.

\subsection{Integral congruence two hyperbolic 4-manifolds}\label{RTbackground}

The manifolds $p_n:W_n\rightarrow N_n$ where $n\in \{23,71,1091,1092\}$ of interest to us all arise as face-pairings of the regular ideal $24$-cell in ${\mathbb H}^4$ (with all dihedral angles $\pi/2$), and are regular $({\Z}/2{\Z})^4$ covers of the orbifold ${\H}^4/\Lambda(2)$ where $\Lambda(2)$ is the level two congruence subgroup of the group $\Or^+(J,{\Z}) = \Or^+(4,1) \cap \Or(J,{\Z})$. These manifolds are referred to
as {\em integral congruence two hyperbolic $4$-manifolds} in \cite[Table 1]{RT}. It will be useful to describe the $({\Z}/2{\Z})^4$ action, and this is best described in the ball model as follows.

Locate the $24$-cell in the ball model of hyperbolic space with vertices 
\[(\pm1,0,0,0), (0,\pm 1,0,0), (0,0,0\pm 1,0), (0,0,0,\pm 1) \] \[\text{ and }~(\pm\frac{1}{2}, \pm\frac{1}{2}, \pm\frac{1}{2}, \pm\frac{1}{2}).\]

The four reflections in the co-ordinate planes of ${\R}^4$ can be taken as generators of this $({\Z}/2{\Z})^4$ group of isometries. Passing to the hyperboloid model, these reflections are elements of $\Lambda(2)$ and are listed as the first four matrices in \cite[page 110]{RT}. Following \cite{RT} we denote
this $({\Z}/2{\Z})^4$ group of isometries by $K < \Lambda(2)$.

As noted in \cite{RT} (see also \cite{RT0}) all of the face-pairings of {\em any} of the integral congruence two hyperbolic $4$-manifolds are invariant under the group $K$. This implies that each of the coordinate hyperplane cross sections of the $24$-cell extends in each of the  integral congruence two hyperbolic $4$-manifolds to a totally geodesic hypersurface which is the fixed point set of one of the reflections described above. Following \cite{RT0} we call these hypersurfaces, {\em cross sections}.
As described in \cite{RT0}, these cross sections, can be identified with integral congruence two hyperbolic $3$-manifolds which are also described in \cite{RT}. Moreover, it is possible to use \cite{RT} or \cite{RT0} to identify these in any given example.

\begin{lemma}
\label{Xsection}
\begin{enumerate}
\item $N_{23}$ has $4$ non-orientable cross-sections all isometric to each other.
\item $N_{71}$  has one orientable cross-section isometric to the complement of the link $8^4_2$ and three non-orientable cross-sections.
\item $N_{1091}$ has one orientable cross-section isometric to the complement of the link $8^4_2$ and three non-orientable cross-sections, two of which are isometric to each other.
\item $N_{1092}$ has two orientable cross-sections isometric to the complement of the link $8^4_2$ and two non-orientable cross-sections which are isometric to each other.
\end{enumerate}
\end{lemma}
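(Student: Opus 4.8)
The plan is to turn the statement into a bounded combinatorial computation with the explicit face-pairing data of the ideal $24$-cell recorded in \cite{RT}. Each $N_n$ is glued from a single regular ideal $24$-cell by $12$ face-pairing isometries, and by \S\ref{RTbackground} the four coordinate reflections $r_1,\dots,r_4$ lie in $\Lambda(2)$ and descend to isometric involutions of $N_n$; the cross-section $C_i$ is the fixed-point set of $r_i$. A coordinate hyperplane $P_i=\{x_i=0\}$ meets the $24$-cell in a regular ideal octahedron, and, because the face-pairings are $K$-invariant, $C_i$ is assembled --- by intersecting $P_i$ and its $\pi_1(N_n)$-translates with the fundamental $24$-cell and gluing by the induced face-identifications --- from finitely many regular ideal octahedra. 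So the task reduces, for each $n\in\{23,71,1091,1092\}$ and each $i\in\{1,2,3,4\}$, to writing down this induced octahedral face-pairing and then recognizing the resulting hyperbolic $3$-manifold; a good deal of this is already available, since \cite{RT0} (cf.\ \cite{RT}) identifies the cross-sections of an integral congruence two $4$-manifold with specific integral congruence two $3$-manifolds from the accompanying census.

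First I would do the bookkeeping: using the vertex coordinates of \S\ref{RTbackground}, the reflection matrices on \cite[page 110]{RT}, and the gluing data of \cite{RT}, determine for each $n$ which facets of the $24$-cell are met by $P_i$, how the $12$ face-pairings act on these facets, and hence the induced gluing of the ideal octahedra that compose $C_i$. From this I would read off orientability of each $C_i$ by the usual orientation-coherence criterion for a gluing of ideal polytopes; this produces the counts $0$, $1$, $1$, $2$ of orientable cross-sections in cases (1)--(4). For each orientable $C_i$ I would then identify the underlying manifold with the complement of $8^4_2$: that complement has a standard decomposition into regular ideal octahedra, so it is enough to check that the induced gluing yields an orientable cusped hyperbolic $3$-manifold with the correct number of octahedra, cusps and first homology, and then exhibit an isometry with the $8^4_2$ complement (or verify it by a computer computation, e.g.\ with \texttt{SnapPy}); for several of these the identification can in any case be read off from \cite{RT} or \cite{RT0}.

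For the ``isometric to each other'' assertions I would look for symmetries of the $24$-cell that permute the coordinate directions and are compatible with the face-pairing defining the given $N_n$: such a symmetry carries $O_i$ to $O_j$ and intertwines the two induced gluings, hence induces an isometry $C_i\cong C_j$. For $N_{23}$ the four coordinate directions should all be interchangeable in this way, giving the four mutually isometric non-orientable cross-sections; for $N_{1091}$ and $N_{1092}$ one expects only the smaller symmetry realizing the stated partial coincidences, and in the cases where no such symmetry is apparent I would instead distinguish (or match) the cross-sections by computing invariants such as volume, number of cusps, $H_1$, or the census number of the associated integral congruence two $3$-manifold.

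The main obstacle I anticipate is carrying all of this out correctly and simultaneously for the four examples: tracking how the $5\times 5$ integral face-pairing matrices restrict to the triangular faces of each octahedron, keeping the orientation conventions straight, and --- most delicately --- reliably recognizing the resulting $3$-manifolds, in particular confirming that every orientable cross-section really is the complement of $8^4_2$ rather than some other octahedral link complement of the same volume, and confirming the claimed isometries among the non-orientable cross-sections rather than merely matching coarse invariants. Since this is a finite but error-prone computation, I would cross-check it against \texttt{SnapPy} and against the cross-section data of \cite{RT, RT0}.
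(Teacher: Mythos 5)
Your overall strategy---reduce the lemma to a finite computation with the face-pairing data of \cite{RT}, read off orientability of each cross-section from the induced side-pairings, identify the orientable ones with the $8^4_2$ complement, and prove the isometry coincidences via symmetries---is in the same spirit as the paper's proof, which reads the cross-section codes $k_1k_5k_9$ off \cite[Table 3]{RT}, decides orientability by whether the $k_i$ lie in $\{1,2,4,7\}$ (the orientation-reversing diagonal matrices), identifies the resulting $3$-manifolds with $M_4^3$ and $M_{10}^3$ of \cite[Table 1]{RT}, and checks that certain codes are equivalent under a symmetry of $Q^3$. However, your setup rests on a concrete geometric error: a coordinate hyperplane does \emph{not} meet the ideal $24$-cell in a regular ideal octahedron. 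With the vertices of \S\ref{RTbackground}, the facets of the $24$-cell are cut out by the $24$ inequalities $\pm x_i\pm x_j\le 1$, so the section by $\{x_4=0\}$ is (in the Klein model) the polyhedron $\{|x_i|+|x_j|\le 1,\ 1\le i<j\le 3\}$: it has $12$ faces, $6$ ideal vertices and $8$ finite vertices at $(\pm\tfrac12,\pm\tfrac12,\pm\tfrac12)$. This is precisely the $12$-sided polytope $Q^3$ underlying the integral congruence two $3$-manifolds of \cite{RT} (the regular ideal octahedron spanned by the six ideal vertices is a proper subpolytope). Consequently each cross-section is a single copy of $Q^3$ with side-pairings $r_ik_i$, not an assembly of regular ideal octahedra, and your recognition scheme---compare an octahedral assembly with the two-octahedron decomposition of the $8^4_2$ complement---does not get off the ground as stated; the identification has to go through RT's census data (or an independent verification such as SnapPy), which is what the paper does.

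Beyond this, the proposal is a plan rather than a proof: the decisive outputs---the orientability counts $0,1,1,2$, the identification of every orientable cross-section with the $8^4_2$ complement rather than some other manifold of the same volume, and the claimed isometries among the non-orientable cross-sections---are asserted as what the computation ``should'' produce, not derived. Note also that for $N_{23}$ your proposed mechanism requires a symmetry of the $24$-cell permuting the coordinate directions and compatible with the face-pairing, which need not exist; what the paper actually uses is weaker and checkable, namely that the two codes \texttt{352} and \texttt{156} occurring among the four cross-sections of $N_{23}$ are equivalent under a symmetry of $Q^3$, so the corresponding side-pairings give isometric manifolds. If you replace the octahedral decomposition by $Q^3$ with its induced side-pairings and actually carry out the code bookkeeping (orientability via the parity of the diagonal pieces $k_i$, identification via \cite[Table 1]{RT}, equivalence of codes under symmetries of $Q^3$), your argument becomes essentially the one in the paper.
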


\begin{proof} The proof of this lemma is similar to the proof of \cite[Lemma 7.1]{CR}. We give a fairly detailed discussion of case (1), and only mention salient points in the remaining cases.

\begin{enumerate}
\item In \cite[Table 3]{RT} the manifold $N_{23}$ is given by the code \texttt{1569A4}  which represents the side pairing \texttt{1111555566669999AAAA4444} for the 24 sides of the ideal 24-cell $Q^4$. In the notation of \cite{RT}, the four cross sections have $k_1k_5k_9$ codes \texttt{352}, \texttt{352}, \texttt{156}, \texttt{156}, which correspond to the side pairings $r_ik_i$ for the 12 sides of the polytope $Q^3$ where $r_i$ is the reflection in side $i$ and $k_1=k_2=k_3=k_4$, $k_5=k_6=k_7=k_8$, $k_9=k_{10}=k_{11}=k_{12}$. Since $r_i$ is a reflection, the side pairing $r_ik_i$ is orientation preserving if and only if the corresponding $k_i$ is orientation reversing. But this happens only if $k_i\in\{1,2,4,7\}$ since then it corresponds to the diagonal matrices with $1\leftrightarrow\mathrm{diag}(-1,1,1,1)$, $2\leftrightarrow\mathrm{diag}(1,-1,1,1)$, $4\leftrightarrow\mathrm{diag}(1,1,-1,1)$, $7\leftrightarrow\mathrm{diag}(-1,-1,-1,1)$. Therefore, all four cross-sections of $N_{23}$ are non-orientable.

From \cite[Table 1]{RT}, we see that the code \texttt{156} corresponds to the non-orientable integral congruence two $3$-manifold $M_4^3$ of \cite{RT}. As in the proof of \cite[Lemma 7.1]{CR}, it can be checked that code \texttt{352} is equivalent to the code \texttt{156} via a symmetry of $Q^3$ (the polyhedron in \cite[Figure 2]{CR}), and hence determine isometric manifolds.

\item In \cite[Table 3]{RT} the manifold $N_{71}$ is given by the code \texttt{13EB34}. In the notation of \cite{RT}, the four cross sections have $k_1k_5k_9$ codes \texttt{712}, \texttt{152}, \texttt{173}, \texttt{136}. It can be checked that the $3$-manifold with code \texttt{712} is orientable and isometric to $M_{10}^3$ of \cite{RT}, and that the remaining codes determine non-orientable manifolds. Therefore, $N_{71}$ has one orientable cross-section, and three non-orientable cross-sections.

\item In \cite[Table 3]{RT} the manifold $N_{1091}$ is given by the code \texttt{53FF35}. In the notation of \cite{RT}, the four cross sections have $k_1k_5k_9$ codes \texttt{712}, \texttt{173}, \texttt{173}, \texttt{537}. As noted in case (2),
the code \texttt{712} determines an orientable $3$-manifold, and it can be checked that the remaining determine non-orientable ones. Hence, $N_{1091}$ has one orientable cross-section, and three non-orientable cross-sections.

\item In \cite[Table 3]{RT} the manifold $N_{1092}$ is given by the code \texttt{53FFCA}. In the notation of \cite{RT}, the four cross sections have $k_1k_5k_9$ codes \texttt{765}, \texttt{174}, \texttt{174}, \texttt{537}. Therefore, the manifolds
with code \texttt{174} is orientable and isometric to $M_{10}^3$ of \cite{RT}, and the remaining two determine non-orientable manifolds. Furthermore, it can be checked that the codes \texttt{765} and \texttt{537} are equivalent via a symmetry of $Q^3$ and determine isometric manifolds. 
Thus, $N_{1092}$ has two isometric orientable cross-sections, and two isometric non-orientable cross-sections.
\end{enumerate}
\end{proof}

In what follows, let $A$ (with orientable double cover $A^+$) denote the non-orientable manifold given by the code \texttt{537}, and similarly let $B$, $C$, $D$, and $E$ (with orientable double covers $B^+$, $C^+$, $D^+$, and $E^+$) denote the non-orientable manifolds given by the codes \texttt{152}, \texttt{173}, \texttt{136}, and \texttt{156} respectively.

\subsection{Volume from tubular neighbourhoods}
\label{tubular}
As in \cite{CR}, to prove \cref{main},  we will make use of a result of Basmajian \cite{Bas} which provides disjoint collars about closed embedded orientable totally geodesic hypersurfaces in hyperbolic manifolds.  We state this only for hyperbolic $4$-manifolds.

Following \cite{Bas}, let $r(x) = \log \coth(x/2)$, and let $V(r)$ denote the volume of a ball of radius $r$ in ${\H}^3$.  It is noted in \cite{Bas} that, $V(r) = \omega_3 \int_0^r \sinh^2(r)dr$, where $\omega_3$ is the area of the unit sphere in ${\R}^3$ (i.e. $\omega_3=4\pi$).

In \cite[pages 213--214]{Bas}, the volume of a tubular neighbourhood of a closed embedded orientable totally geodesic hyperbolic $3$-manifold of $3$-dimensional hyperbolic volume $A$ in a hyperbolic $4$-manifold is given
in terms of the the  $4$-dimensional tubular neighbourhood function $c_4(A) = (\frac{1}{2})(V \circ r)^{-1}(A)$. Moreover,
as noted in \cite[Remark 2.1]{Bas}, when the totally geodesic submanifold separates, an improved estimate can be obtained using the tubular neighbourhood function $d_4(A) = (\frac{1}{2})(V \circ r)^{-1}(A/2)$ and we record this as follows.

\begin{lemma}
\label{volume_in_hood} 
Let $X$ be an orientable finite volume hyperbolic $4$-manifold containing a closed embedded separating orientable totally geodesic hyperbolic $3$-manifold of $3$-dimensional hyperbolic volume $A$. 
Then $X$ contains a tubular neighbourhood of $M$ of volume
\[\mathcal{V}'(A) = 2A\int_0^{d_4(A)}\cosh^3(t) dt.\]
\end{lemma}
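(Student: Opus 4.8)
The plan is to combine two ingredients: an exact formula for the volume of an embedded collar of a given radius about $M$, and Basmajian's lower bound for how large that radius may be chosen.

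First I would set up Fermi coordinates about $M$. Since $X$ and $M$ are orientable and $M$ has codimension one, $w_1$ of the normal bundle of $M$ vanishes, so that bundle is trivial, $M$ is two-sided, and (as $M$ separates) the two local sides correspond to the two components of $X\setminus M$. Because $M$ is totally geodesic, the normal exponential map $\Phi(x,t)=\exp_x(t\,\nu(x))$ is a local isometry from $M\times\mathbb{R}$, equipped with the warped product metric $dt^2+\cosh^2(t)\,g_M$, to $X$---this is the standard description of the hypersurfaces equidistant from a totally geodesic $\mathbb{H}^3\subset\mathbb{H}^4$. Hence the volume element is $\cosh^3(t)\,dt\wedge dV_M$, and whenever $\Phi$ is injective on $M\times(-R,R)$ it embeds that region as a tube of volume
\[ \int_M\!\int_{-R}^{R}\cosh^3(t)\,dt\,dV_M \;=\; \mathrm{Vol}(M)\int_{-R}^{R}\cosh^3(t)\,dt \;=\; 2A\int_0^{R}\cosh^3(t)\,dt, \]
using that $\cosh^3$ is even. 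So it remains to show that $\Phi$ is injective on $M\times(-R,R)$ for $R=d_4(A)$.

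For this I would pass to $\mathbb{H}^4$, fix a component $\widetilde M_0$ of the preimage of $M$ with stabiliser $\Gamma_0<\Gamma=\pi_1(X)$, and recall that the other components are the translates $\gamma\widetilde M_0$, $\gamma\in\Gamma/\Gamma_0$, which are pairwise disjoint geodesic hyperplanes because $M$ is embedded. The radius-$R$ tube is embedded exactly when $d(\widetilde M_0,\gamma\widetilde M_0)\ge 2R$ for every $\gamma\notin\Gamma_0$: the $R$-neighbourhoods of two disjoint hyperplanes in $\mathbb{H}^4$ meet precisely when that distance is less than $2R$, and the $\Gamma_0$-translates cause no collisions since, $M$ being separating and two-sided, $\Gamma_0$ preserves the two sides of $\widetilde M_0$. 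To bound $\ell_\gamma:=d(\widetilde M_0,\gamma\widetilde M_0)$ below one uses the geometric lemma of \cite{Bas}: the nearest-point projection of $\gamma\widetilde M_0$ onto $\widetilde M_0$ lies in the metric ball of radius $r(\ell_\gamma)$ about the foot of the common perpendicular, inside $\widetilde M_0\cong\mathbb{H}^3$. Projecting down, these balls embed disjointly in $M$, so $\sum_\gamma V(r(\ell_\gamma))\le \mathrm{Vol}(M)=A$; since $V\circ r$ is decreasing, each $\ell_\gamma\ge (V\circ r)^{-1}(A)=2c_4(A)$, which recovers the embedded tube of radius $c_4(A)$ of \cite[pages~213--214]{Bas}.

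The separating hypothesis sharpens the packing bound: when $M$ separates, each $\gamma\widetilde M_0$ lies on a definite side of $\widetilde M_0$, and tracking both endpoints of each common perpendicular (the second obtained by translating back by $\gamma^{-1}$) shows that one is really packing two disjoint radius-$r(\ell_\gamma)$ balls into $M$ for each relevant $\gamma$, so $\sum_\gamma V(r(\ell_\gamma))\le A/2$. Then $\ell_\gamma\ge(V\circ r)^{-1}(A/2)=2d_4(A)$ for all $\gamma\notin\Gamma_0$, so $\Phi$ is injective on $M\times(-d_4(A),d_4(A))$, and by the displayed computation the resulting tube has volume $2A\int_0^{d_4(A)}\cosh^3(t)\,dt=\mathcal{V}'(A)$. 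This last refinement is \cite[Remark~2.1]{Bas} specialised to dimension $4$, and it is the one step I expect to require genuine care---the side-bookkeeping that produces the factor $A/2$; I would extract it directly from \cite{Bas} rather than redo it, since the remaining steps are a routine integration and a standard application of the collar construction.
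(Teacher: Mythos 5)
Your proposal is correct and follows essentially the same route as the paper, which simply records Basmajian's tube formula together with the improvement of \cite[Remark 2.1]{Bas} for separating hypersurfaces; your warped-product computation of the tube volume $2A\int_0^{R}\cosh^3(t)\,dt$ and the reduction to the bound $d(\widetilde M_0,\gamma\widetilde M_0)\ge 2d_4(A)$ are exactly what the cited formula encodes. The only step carrying real content beyond routine integration --- the side-bookkeeping that replaces $A$ by $A/2$ in the packing bound --- is the one you rightly defer to Basmajian rather than reprove, which is precisely what the paper does.
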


Moreover, \cite{Bas} also proves that disjoint embedded closed orientable totally geodesic hyperbolic $3$-manifolds in an orientable finite volume hyperbolic $4$-manifold have disjoint collars, thereby contributing additional volume.  For our purposes we summarize what we need in the following.

\begin{corollary}
\label{more_vol}
Let $X$ be an orientable finite volume hyperbolic $4$-manifold of Euler characteristic $\chi$ containing $K$ disjoint copies of a closed embedded orientable totally geodesic hyperbolic $3$-manifold of $3$-dimensional hyperbolic volume $A$. 
Assume that all of these disjoint copies separate $X$. Then
\[\Vol(X) = (\frac{4\pi^2}{3})\chi \geq K\mathcal{V}'(A).\]
\end{corollary}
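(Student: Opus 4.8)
The plan is to treat the two halves of the statement separately: the equality $\Vol(X)=\tfrac{4\pi^2}{3}\chi$ is a Gauss--Bonnet computation, while the inequality $\Vol(X)\ge K\mathcal{V}'(A)$ follows by applying \cref{volume_in_hood} to each of the $K$ copies and invoking the disjointness of the resulting collars recorded from \cite{Bas} just above.

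For the equality I would invoke the Chern--Gauss--Bonnet theorem for finite volume hyperbolic $4$-manifolds: since $X$ has constant sectional curvature $-1$, the Gauss--Bonnet integrand is a fixed constant multiple of the volume form, and integrating over $X$ yields $\chi(X)=\tfrac{3}{4\pi^2}\Vol(X)$. This is standard and requires no further argument; in the cases relevant to \cref{main} it already forces $\Vol(W_n)=\tfrac{8\pi^2}{3}$, but the corollary is phrased for general $\chi$.

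For the inequality, write $\Sigma_1,\dots,\Sigma_K\subset X$ for the $K$ pairwise disjoint embedded closed orientable totally geodesic hyperbolic $3$-manifolds, each isometric to a fixed one of volume $A$ and each separating $X$ by hypothesis. Applying \cref{volume_in_hood} to $\Sigma_i$ produces an embedded tubular neighbourhood $T_i$ of volume $\mathcal{V}'(A)=2A\int_0^{d_4(A)}\cosh^3(t)\,dt$: concretely $T_i$ is the image of the normal exponential map on the radius-$d_4(A)$ disc bundle of $\Sigma_i$ (which makes sense as $\Sigma_i$ is two-sided), and its volume is computed from the fact that the parallel hypersurface at signed distance $t$ from a totally geodesic $\H^3\subset\H^4$ carries the metric of $\Sigma_i$ rescaled by $\cosh t$ in each of its three tangential directions. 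By the disjointness statement of \cite{Bas} recalled before the corollary, the $T_i$ may be taken pairwise disjoint in $X$, so that
\[
\Vol(X)\ \ge\ \Vol\Bigl(\bigcup_{i=1}^{K}T_i\Bigr)\ =\ \sum_{i=1}^{K}\Vol(T_i)\ =\ K\,\mathcal{V}'(A);
\]
combining this with the equality gives the assertion.

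The one point to be careful about --- and the only genuine content beyond bookkeeping --- is the compatibility of the disjointness conclusion with the \emph{improved} collar radius $d_4(A)\ge c_4(A)$ coming from \cref{volume_in_hood} (the inequality holds since $V\circ r$ is decreasing, hence so is $(V\circ r)^{-1}$, and $A/2\le A$). Basmajian's unconditional disjoint-collar statement is phrased for the radius $c_4(A)$ collars; one must check that his argument, which in $\H^4$ bounds from below the distance between a lift of $\Sigma_i$ and any distinct nearby lift of some $\Sigma_j$ in terms of the relevant $3$-dimensional volume, still yields disjoint collars of radius $d_4(A)$ once every $\Sigma_i$ separates $X$ --- the simultaneous analogue of \cite[Remark 2.1]{Bas} for a single separating hypersurface. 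I would settle this by observing that separation of each $\Sigma_i$ excludes the side-reversing return configuration responsible for the gap between $c_4$ and $d_4$, so that each one-sided half-collar of radius $d_4(A)$ injects and the whole family remains disjoint.
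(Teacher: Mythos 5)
Your proposal follows the same route as the paper, which states this corollary as a summary of results from \cite{Bas} and gives no independent argument: Chern--Gauss--Bonnet supplies the equality $\Vol(X)=\tfrac{4\pi^2}{3}\chi$, \cref{volume_in_hood} supplies the volume of each collar, and Basmajian's disjoint-collar theorem lets the $K$ contributions be added. Your Gauss--Bonnet computation and your derivation of the collar volume (Jacobian $\cosh^3 t$ for the equidistant hypersurfaces, factor $2$ for the two sides of a two-sided hypersurface) are correct, as is the observation that $d_4(A)\ge c_4(A)$ because $V\circ r$ is decreasing.

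The substantive point is the one you flag yourself, and your resolution of it is only half complete. Separation of $\Sigma_i$ does give the improved radius for the \emph{individual} collar: an orthogeodesic arc from $\Sigma_i$ back to itself has interior in a single component of the complement, hence leaves and returns on the same side, so its two shadow disks of radius $r(\ell)$ sit disjointly inside the single copy $\Sigma_i$ of volume $A$; this yields $2V(r(\ell))\le A$, i.e. $\ell\ge 2d_4(A)$, which is exactly \cite[Remark 2.1]{Bas}. What this does \emph{not} give is the pairwise disjointness of the enlarged collars of distinct copies: the shortest orthogeodesic from $\Sigma_i$ to $\Sigma_j$ with $i\ne j$ contributes only one shadow disk to each of $\Sigma_i$ and $\Sigma_j$, so the basic count gives only $V(r(\ell))\le A$, i.e. $d(\Sigma_i,\Sigma_j)\ge 2c_4(A)$, which is weaker than the bound $d(\Sigma_i,\Sigma_j)\ge 2d_4(A)$ needed for the radius-$d_4(A)$ collars to be disjoint and their volumes to add. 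Your closing claim that ``the whole family remains disjoint'' invokes a mechanism (excluding side-reversing returns) that only governs self-returns of a single hypersurface, so it does not close this gap. To finish, you would need to extract from \cite{Bas} the multi-hypersurface statement at the $d_4$ radius, or prove the missing lower bound on the pairwise distances, or weaken the additivity step to a statement about the volume of the union of the collars. The paper does not address this either---it simply cites \cite{Bas}---so your writeup is no less complete than the source, but the issue you identified is genuine and your proposed fix does not resolve it.
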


Given this set up we recall the basic strategy of \cite{CR}.  
To that end, let $N$ (resp. $W$) denote one of the manifolds $N_{23}$, $N_{71}$, $N_{1091}$, or $N_{1092}$ (resp. $W_{23}$, $W_{71}$, $W_{1091}$, or $W_{1092}$) and $M\hookrightarrow W$ a closed embedded totally geodesic hyperbolic $3$-manifold. Since $W\subset S^4$, $M$ is orientable and the embedding separates $S^4$.

As in \cite[Lemma 7.2]{CR} since $W$ is the orientable double cover of $N$, it is a characteristic cover of $N$ and hence a regular cover of $\mathbb{H}^4/\Lambda(2)$ (using \cref{RTbackground}).
If it can be shown that $M$ is disjoint from the preimages 
of all of the cross-sections in $N$, then since
$W$ is a regular cover of ${\H}^4/\Lambda(2)$, using the isometries of $W$ induced from the reflections in the
co-ordinate hyperplanes we get $16$ disjoint copies of $M$, all embedded and separating in $W$ (since it is a submanifold of $S^4$). 

Now the minimal volume of a closed hyperbolic 3-manifold is that of the Weeks manifold and is approximately $0.9427\ldots$ \cite{GMM}.
Using this estimate for $\Vol(M)$,  and applying
\cref{more_vol} we see that $\Vol(W) \geq 16\mathcal{V}'(0.94)$, which  is approximately $28.9$. On the other hand, since $\chi(W)=2$, $\Vol(W)= \frac{8\pi^2}{3}$ which is approximately $26.3$, a contradiction.

As proved in \cite[Lemma 3.2]{CR}, any $M$ (as above) is disjoint from the lift of any orientable cross-section in $N$.
 To prove \cref{main} we need to show that $M$ is disjoint from the preimage of a non-orientable cross-section in $N$.
 This follows from our next lemma, since if $M$ (as above) was not disjoint this would give rise to a closed embedded totally geodesic (possibly non-orientable) surface in the 
preimage of the cross-section. 

\begin{lemma}
\label{noclosedinNon}
Let $Y$ be any of the non-orientable cross-sections listed in \cref{Xsection} and $Y^+$ the orientable double cover. Then $Y^+$ (and hence $Y$) does not contain a closed embedded totally geodesic surface.
\end{lemma}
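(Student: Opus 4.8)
The plan is to use that each $Y^+\in\{A^+,B^+,C^+,D^+,E^+\}$ is arithmetic and covers the Picard orbifold $\mathcal P=\H^3/\PSL(2,\Z[i])$, and to reduce the assertion to explicit computations with Fuchsian subgroups of $\PSL(2,\Z[i])$. First I would make the covering $Y^+\to\mathcal P$ explicit, writing $Y^+=\H^3/\Gamma_{Y^+}$ with $\Gamma_{Y^+}$ a finite-index subgroup of $\PSL(2,\Z[i])$ recovered from the face-pairing and cross-section data of \cite{RT,RT0}; this records $\Vol(Y^+)$ and the index $[\PSL(2,\Z[i]):\Gamma_{Y^+}]$. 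It suffices to treat $Y^+$ rather than $Y$: the preimage under $p:Y^+\to Y$ of a closed embedded totally geodesic surface in $Y$ is again a closed embedded totally geodesic surface, and any component of it would contradict the statement for $Y^+$.

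Next I would reduce to arithmetic Fuchsian groups. A closed embedded totally geodesic surface $S\subset Y^+$ of area $a\ge 2\pi$ determines a cocompact Fuchsian subgroup $F<\Gamma_{Y^+}$ (up to replacing $F$ by its index-two orientation-preserving subgroup when $S$ is one-sided). Since $\Gamma_{Y^+}$ is arithmetic with invariant trace field $\Q(i)$, the finite-coarea group $F$ is itself arithmetic, and because $\Q$ is the unique totally real subfield of $\Q(i)$, $F$ is derived from a quaternion algebra $\mathcal A$ over $\Q$ that is unramified at the real place (so that $F$ is genuinely Fuchsian) and split by $\Q(i)$ (so that $\mathcal A\otimes_\Q\Q(i)$ embeds in $M_2(\Q(i))$). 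Compactness of $S$ forces $\mathcal A$ to be a division algebra, hence ramified at a nonempty even-size set of finite primes, each equal to $2$ or congruent to $3\bmod 4$. This confines the commensurability class of $S$ to a short list whose smallest members are the maximal arithmetic Fuchsian groups in the algebras ramified at $\{2,3\}$, $\{2,7\}$, $\{3,7\}$, $\{2,11\}$, $\dots$, with coareas computed from Borel's volume formula (the first being $\pi/6$).

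Finally I would convert $\Vol(Y^+)$ into an upper bound for $a$: an embedded closed totally geodesic surface of area $a$ in a finite-volume hyperbolic $3$-manifold bounds an embedded collar whose volume is a definite increasing function of $a$ (the $3$-dimensional counterpart of \cref{volume_in_hood}, obtained from the estimates of \cite{Bas}), so $a$ is bounded in terms of $\Vol(Y^+)$. As each $\Vol(Y^+)$ is small, only finitely many candidate surfaces survive — in practice, low-degree covers of the $\{2,3\}$-orbifold — and for each $Y^+$ and each survivor the remaining task is a direct computation inside $\PSL(2,\Z[i])$: decide whether the relevant cocompact Fuchsian group can be conjugated into $\Gamma_{Y^+}$, and, when it can, whether the $\Gamma_{Y^+}$-translates of its invariant geodesic plane are pairwise disjoint; a self-intersection exhibits $S$ as non-embedded. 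I expect this last step to be the main obstacle. It requires locating the relevant geodesic planes, computing their stabilisers in the various $\Gamma_{Y^+}$, and deciding their mutual position, and it is exactly here that the complications over \cite{CR} appear: the non-orientable cross-sections $A,\dots,E$ fall outside the scope of the alternating-link argument of \cite{MeR} used for the orientable cross-sections, so these cases genuinely need the Fuchsian-subgroup analysis carried out in \S\ref{Fuchsian} and \S\ref{ss1092}.
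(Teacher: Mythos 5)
Your overall framing (reduce to explicit Fuchsian subgroups of $\PSL(2,\Z[i])$, classify candidates, then check embeddedness by computing intersections of the invariant planes) matches the paper's, but there is a genuine gap at the step that is supposed to make the candidate list finite. You propose to bound the area $a$ of a closed embedded totally geodesic surface by $\Vol(Y^+)$ via a $3$-dimensional collar estimate. That estimate does not do what you need: for a totally geodesic surface of area $a$ in a hyperbolic $3$-manifold, Basmajian's collar width behaves like $\pi/a$ for large $a$, so the collar volume $2a\int_0^{c_3(a)}\cosh^2 t\,dt$ is an increasing function that is \emph{bounded above} (its limit as $a\to\infty$ is about $2\pi$). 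Since $\Vol(Y^+)=48\,\Vol(\H^3/\PSL(2,\Z[i]))\approx 14.66>2\pi$, the inequality $\mathcal{V}(a)\le\Vol(Y^+)$ holds for every $a$ and yields no bound on the area whatsoever. (This is exactly the place where dimension matters: in the $4$-dimensional argument of \cref{more_vol} the collar volume grows like $\sqrt{A}$ and is unbounded, which is why that strategy works for \cref{main} but not here.) Without an area bound, your arithmetic classification leaves infinitely many candidates — infinitely many ramification sets $\{2,3\},\{2,7\},\dots$, and within each commensurability class infinitely many $\PSL(2,\Z[i])$-classes of invariant circles of unbounded discriminant — so the "finitely many survivors" you need never materialize.

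The idea you are missing is the paper's \cref{onlycircle}: since $\Gamma(2+2i)=[\Gamma(1+i),\Gamma(1+i)]$ is contained in every $\pi_1(Y^+)$ (\cref{inpicard} and \cref{iscommutator}), the parabolic $z\mapsto z+(2+2i)$ lies in $\pi_1(Y^+)$, and for the surface to be embedded its invariant circle must either be fixed or moved off itself by this translation of length $2\sqrt 2$; hence the circle has radius at most $\sqrt 2$. Combined with the Maclachlan--Reid normal forms $\mathcal{C}_D$, $\mathcal{C}_{D,j}$ (radii $\sqrt D$ and $\sqrt D/2$) this leaves a short list, of which only $\mathcal{C}_{6,3}$ can support a \emph{closed} surface. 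Only after that reduction does one run the Jung--Reid trace criterion $\left|\mathrm{tr}(\gamma^*A_i\gamma A_i^{-1})\right|<2$ against the six coset translates $T_i\mathcal{C}_{6,3}$ using explicit generators of each $\pi_1(Y^+)$. Your final computational step is in the right spirit, but as formulated (conjugating abstract arithmetic Fuchsian groups into $\Gamma_{Y^+}$) it is both unbounded in scope and considerably harder to execute than the circle/Hermitian-form calculus the paper actually uses.
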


The strategy to prove \cref{noclosedinNon} is this: we first identify $\Gamma=\pi_1(Y^+)$ as a congruence subgroup of the Picard group $\PSL(2,\Z[i])$, and identify matrices (up to sign) that correspond to a generating set for $\Gamma$. We next use the classification of circles left invariant by non-elementary Fuchsian subgroups of $\PSL(2,\Z[i])$ given in \cite{MR0} to limit the possibilities for what circles can be associated to a closed embedded totally geodesic surface in $Y^+$. Finally we use a criterion given by \cite[Corollary 3.3]{JR} to prove that any candidate totally geodesic surface cannot be embedded. 

The proof of \cref{noclosedinNon}  occupies the remainder of this paper.

\section{The Picard group and the fundamental groups of the cross-sections}\label{picard}

The Picard group $\PSL(2,\Z[i])$ has a presentation from \cite{Sw}:
\begin{multline*}
\PSL(2,\Z[i]) = <\alpha,l,t,u|\alpha^2=l^2=(\alpha l)^2=(tl)^2=(ul)^2\\=(\alpha t)^3=(u\alpha l)^3=[t,u]=1>,
\end{multline*}
where these generators can be represented by the matrices (up to sign) shown below.  
\[\alpha=\left(
\begin{array}{cc}
 0 & 1 \\
 -1 & 0 \\
\end{array}
\right),
t=\left(
\begin{array}{cc}
 1 & 1 \\
 0 & 1 \\
\end{array}
\right),
u=\left(
\begin{array}{cc}
 1 & i \\
 0 & 1 \\
\end{array}
\right),
l=\left(
\begin{array}{cc}
 i & 0 \\
 0 & -i \\
\end{array}
\right).\]

\subsection{Locating subgroups}
\label{locate}
It will be helpful to prove the following result which helps "locate" the fundamental groups of the manifolds $Y^+$ as subgroups of $\PSL(2,\Z[i])$.

\begin{prop}\label{inpicard}
Let $Y$ be any of the non-orientable cross-sections listed in \cref{Xsection} and $Y^+$ the orientable double cover.  Then $\pi_1(Y^+)$ admits a faithful representation with image group $\Gamma$ of index $48$ in $\PSL(2,\Z[i])$ and
$[\Gamma(1+i),\Gamma(1+i)] \triangleleft \Gamma \triangleleft  \Gamma(1+i)$.\end{prop}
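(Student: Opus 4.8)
The plan is to realise $\Gamma=\pi_1(Y^+)$ explicitly as a subgroup of $\PSL(2,\Z[i])$ and then to read the two inclusions and the index off explicit matrices. Note first that, \emph{assuming} the inclusions $[\Gamma(1+i),\Gamma(1+i)]\subseteq\Gamma\subseteq\Gamma(1+i)$, the normality assertions in the statement are automatic: $[\Gamma(1+i),\Gamma(1+i)]$ is normal in $\Gamma(1+i)$ and contained in $\Gamma$, hence normal in $\Gamma$; and $\Gamma/[\Gamma(1+i),\Gamma(1+i)]$ is a subgroup of the abelian group $\Gamma(1+i)/[\Gamma(1+i),\Gamma(1+i)]$, hence normal in it, so $\Gamma\triangleleft\Gamma(1+i)$. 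Thus it is enough to: (i) realise $\pi_1(Y^+)$ as a subgroup $\Gamma$ of $\PSL(2,\Z[i])$; (ii) verify $\Gamma\subseteq\Gamma(1+i)$; (iii) verify $[\PSL(2,\Z[i]):\Gamma]=48$ and $[\Gamma(1+i),\Gamma(1+i)]\subseteq\Gamma$.

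For (i) and (ii): each non-orientable cross-section $Y$ is an integral congruence two hyperbolic $3$-manifold in the sense of \cite{RT}, obtained by gluing the faces of a single copy of the polytope $Q^3$; in particular $\pi_1(Y)$ is (conjugate to) a subgroup of $\Or^+(J_3,{\Z})$ for $J_3=x_1^2+x_2^2+x_3^2-x_4^2$, a group commensurable with the Picard group, and $Y^+$ is a cusped arithmetic hyperbolic $3$-manifold with invariant trace field $\Q(i)$. Concretely I would take, for each $Y\in\{A,B,C,D,E\}$, the face-pairing transformations $r_ik_i$ of $Q^3$ encoded by the relevant code in \cite[Table 3]{RT}, pass to the index-two orientation-preserving subgroup $\pi_1(Y^+)\le\pi_1(Y)$, and express its generators as matrices in $\PSL(2,\C)$ via the exceptional isomorphism $\SO^+(3,1)\cong\PSL(2,\C)$ (as in the analysis of the manifold $1011$ in \cite{CR}); a suitable fixed conjugation, available since $\Q(i)$ has class number one, places $\Gamma:=\pi_1(Y^+)$ inside $\PSL(2,\Z[i])$, and the representation is faithful and discrete because $Y^+$ is a finite-volume hyperbolic manifold. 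Reducing each generator modulo the prime $(1+i)$ of $\Z[i]$ and checking that it reduces to the identity then gives $\Gamma\subseteq\Gamma(1+i)$, which has index $6$ in $\PSL(2,\Z[i])$.

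For (iii): compute a presentation of $\Gamma(1+i)$, hence its abelianisation $\Gamma(1+i)^{\ab}$, by a Reidemeister--Schreier rewriting of the Swan presentation of $\PSL(2,\Z[i])$ of \S\ref{picard} along a transversal for $\Gamma(1+i)$, and check that the images in $\Gamma(1+i)^{\ab}$ of the generators of $\Gamma$ span a subgroup of index $8$. Together with an independent check that $[\PSL(2,\Z[i]):\Gamma]=48$ — by coset enumeration on the explicit generating matrices, or from $\Vol(\H^3/\Gamma)=2\Vol(Y)=2\Vol(Q^3)$ divided by the covolume of the Picard group — this forces $\Gamma$ to be the full preimage in $\Gamma(1+i)$ of that index-$8$ subgroup of $\Gamma(1+i)^{\ab}$, whence $[\Gamma(1+i),\Gamma(1+i)]\subseteq\Gamma$. (A natural candidate for the intermediate group is the principal congruence subgroup $\Gamma(2)$, which has index $8$ in $\Gamma(1+i)$ and contains $[\Gamma(1+i),\Gamma(1+i)]$, but the actual $\Gamma$ depends on $Y$ and is determined only by the computation above.)

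The conceptual content here is light; the real obstacle is computational accuracy. Extracting the face-pairing isometries of $Q^3$ correctly from the combinatorial codes of \cite{RT}, carrying them faithfully through the $\SO^+(3,1)\cong\PSL(2,\C)$ dictionary into honest $\PSL(2,\Z[i])$-matrices, choosing the conjugation so that the image is simultaneously integral and contained in $\Gamma(1+i)$ — rather than merely in some larger commensurable group — and then executing the Reidemeister--Schreier abelianisation without slips, all for the five manifolds $A^+,B^+,C^+,D^+,E^+$, is where care is needed, and (as the acknowledged referee corrections to the first version of this paper indicate) where errors are easiest to make.
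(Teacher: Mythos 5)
Your proposal is sound and would prove the statement, but it takes a more computational route than the paper at the two key steps, and it is worth seeing what the structural argument buys. The paper does not verify $\Gamma\subseteq\Gamma(1+i)$ by reducing explicit matrices mod $(1+i)$; instead it observes that $\Lambda^+(2)=\Lambda(2)\cap\SO^+(3,1;\Z)$ is a normal subgroup of index $12$ in $\PGL(2,\Z[i])$ which lies in the commutator subgroup (because the composite $\SO^+(3,1;\Z)\to S_3\times\Z/2\Z\to\Z/2\Z\times\Z/2\Z$ realizes the full abelianization), hence is a normal subgroup of index $6$ in $\PSL(2,\Z[i])$, and then invokes Fine--Newman's theorem that $\Gamma(1+i)$ is the \emph{unique} such subgroup. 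Likewise, the containment $[\Gamma(1+i),\Gamma(1+i)]\subseteq\Gamma$ and the index $48$ both fall out of a single structural fact that your plan does not exploit: the integral congruence two $3$-manifolds are by construction \emph{regular} $(\Z/2\Z)^3$-covers of $\H^3/\Lambda(2)$, so $\pi_1(Y^+)\triangleleft\Lambda^+(2)\cong\Gamma(1+i)$ with quotient the abelian group $(\Z/2\Z)^3$; this gives $[\Gamma(1+i):\Gamma]=8$ (hence index $48$) and kills the commutator subgroup in one stroke, with no Reidemeister--Schreier abelianization of $\Gamma(1+i)$ and no computation of the image of the generators of $\Gamma$ needed for this proposition. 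Your fallback --- compute $\Gamma(1+i)^{\ab}$, check the image of $\Gamma$ has index $8$ there, and combine with an independent index count --- is logically valid but reproves by machine what the covering-space description gives for free. One small gap to flag in your step (i)--(ii): the isomorphism of \cite{BLW} identifies $\SO^+(3,1;\Z)$ with $\PGL(2,\Z[i])$, not $\PSL(2,\Z[i])$ (e.g.\ $ac$ maps to a matrix of determinant $i$), so before you can ``reduce generators mod $(1+i)$'' you must first justify that $\pi_1(Y^+)$ lands in the index-two subgroup $\PSL(2,\Z[i])$; this is exactly what the paper's commutator-subgroup argument for $\Lambda^+(2)$ accomplishes, and your write-up leaves it implicit.
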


\begin{proof}  To establish that $\pi_1(Y^+)$ admits a faithful representation with image group $\Gamma$ with $\Gamma \triangleleft  \Gamma(1+i)$, 
recall from \cite[Section 3]{RT} that these integral congruence two hyperbolic 3-manifolds are constructed as follows.  As in the case of dimension $4$ described above these manifolds arise as regular covers (all with covering group $(\Z/2\Z)^3$) of a
certain congruence quotient of $\H^3$, namely the subgroup $\Lambda(2) < \Or^+(3,1;\Z)$. As shown in \cite[p.105]{RT}:
\[[\Or^+(3,1;\Z):\Lambda(2)]=12~\hbox{with}~\Or^+(3,1;\Z)/\Lambda(2) \cong S_3\times \Z/2\Z.\] 
In addition, \cite[Section 3]{RT} identifies the group $\Or^+(3,1;\Z)$ with the Coxeter group $T$, generated by reflections in the faces of the non-compact tetrahedron with
Coxeter diagram:
\medskip
\begin{center}
\begin{tikzcd}
 \bullet \arrow[r,"3", start anchor={[xshift=-2ex]}, end anchor={[xshift=2ex]}, dash] & \bullet \arrow[r,"4", start anchor={[xshift=-2ex]}, end anchor={[xshift=2ex]}, dash] & \bullet \arrow[r,"4", start anchor={[xshift=-2ex]}, end anchor={[xshift=2ex]}, dash] & \bullet
\end{tikzcd}
\end{center}

\medskip

Using a presentation of this Coxeter group, one can find a presentation for the subgroup $\SO^+(3,1;\Z)$ (consisting of orientation preserving isometries) of index $2$ in $\Or^+(3,1;\Z)$, and it follows from this that the abelianization 
of $\SO^+(3,1;\Z)$ is 
$\Z/2\Z\times\Z/2\Z$. Indeed, it is known (see \cite{BLW} for example) that the subgroup $\SO^+(3,1;\Z)$ can be identified with $\PGL(2,\Z[i])$, which in turn contains $\PSL(2,\Z[i])$ of index $2$.

Since $\Lambda(2)$ contains elements of determinant $-1$, the group $\Lambda^+(2)=\Lambda(2)\cap\SO^+(3,1;\Z)$ has index $2$ in $\Lambda(2)$, and so $\Lambda^+(2)$ is isomorphic to a normal subgroup of index $12$ in 
$\PGL(2,\Z[i])$ with quotient group $S_3\times \Z/2\Z$. 

As noted above, the abelianization of $\SO^+(3,1;\Z)\cong \PGL(2,\Z[i])$ is $\Z/2\Z\times\Z/2\Z$. 
We claim that this implies that $\Lambda^+(2)$ is contained in the commutator subgroup of $\SO^+(3,1;\Z)$. To see this, first recall that the abelianization of $S_3$ is $\Z/2\Z$ so we get the epimorphism $a:S_3\times \Z/2\Z\rightarrow\Z/2\Z\times\Z/2\Z$. Since the surjective composition
\[\SO^+(3,1;\Z)\xrightarrow{p} S_3\times \Z/2\Z\xrightarrow{a} \Z/2\Z\times\Z/2\Z,\]
lands in an abelian group, it factors through the abelianization of $\SO^+(3,1;\Z)$, with the second map an isomorphism. It then follows that $\Lambda^+(2)$ must be contained in the commutator subgroup of $\SO^+(3,1;\Z)$.

Furthermore, via the above identifications, $\Lambda^+(2)$ must be isomorphic to a normal subgroup of index $6$ in $\PSL(2,\Z[i])$. By \cite[Theorem 2]{FN} there is a unique normal subgroup of index $6$ in $\PSL(2,\Z[i])$ and it is the principal congruence subgroup $\Gamma(1+i)$ (i.e. those elements in $\PSL(2,\Z[i])$ congruent to the identity modulo the ideal $(1+i)$).

From \cite[p.105]{RT}, the group $\Or^+(3,1;\Z)$ is generated by the 4 reflections
\[
a= \begin{pmatrix}
0&1&0&0 \\ 1&0&0&0 \\ 0&0&1&0 \\ 0&0&0&1
\end{pmatrix}, 
b= \begin{pmatrix}
1&0&0&0 \\ 0&0&1&0 \\ 0&1&0&0 \\ 0&0&0&1
\end{pmatrix}, 
c= \begin{pmatrix}
1&0&0&0 \\ 0&1&0&0 \\ 0&0&-1&0 \\ 0&0&0&1
\end{pmatrix},\]
\[d= \begin{pmatrix}
0&-1&-1&1 \\ -1&0&-1&1 \\ -1&-1&0&1 \\ -1&-1&-1&2
\end{pmatrix}, 
\]
and the subgroup $\SO^+(3,1;\Z)$ can be identified with $\PGL(2,\Z[i])$ via the isomorphism defined by
\[
ab \mapsto \begin{pmatrix} 1&-1 \\ 1&0 \end{pmatrix},
ac \mapsto \begin{pmatrix} 0&1 \\ -i&0 \end{pmatrix},
ad \mapsto \begin{pmatrix} 0&1 \\ 1&0 \end{pmatrix}.
\]

From \cite[p.106]{RT} the group $\Lambda(2)$ is generated by the reflections 
$r_1=abcba$, $r_2=bcb$, $r_3=c$, $r_4=abdcdba$, $r_5=bdcdb$, $r_6=dcd$, and the subgroup $\Lambda^+(2)$ can therefore be identified with the group $\Gamma(1+i)$ via the induced isomorphism defined by
\[
r_1r_2 \mapsto \begin{pmatrix} -1&1+i \\ -1+i&1 \end{pmatrix},
r_1r_3 \mapsto \begin{pmatrix} -i&-1+i \\ 0&i \end{pmatrix},
r_1r_4 \mapsto \begin{pmatrix} -i&2i \\ 0&i \end{pmatrix},
\] \[
r_1r_5 \mapsto \begin{pmatrix} 2-i&-1+i \\ 1-i&i \end{pmatrix},
r_1r_6 \mapsto \begin{pmatrix} 1&1+i \\ 0&1 \end{pmatrix}.
\]

As noted above, $\pi_1(Y) \triangleleft \Lambda(2)$ with quotient group $(\Z/2\Z)^3$, so it follows that $\pi_1(Y^+) \triangleleft \Lambda^+(2)\cong\Gamma(1+i)$ with quotient group $(\Z/2\Z)^3$.
Now $\Gamma(1+i)/[\Gamma(1+i),\Gamma(1+i)]\cong (\Z/2\Z)^5$ (see the Magma \cite{Mag} routine following this proof). Since the map $\Gamma(1+i) \rightarrow \Gamma(1+i)/\pi_1(Y^+)\cong (\Z/2\Z)^3$ goes to an abelian group, the commutator subgroup $[\Gamma(1+i),\Gamma(1+i)]$ is sent to $1$ and it follows that $[\Gamma(1+i),\Gamma(1+i)]\triangleleft \pi_1(Y^+)$.  

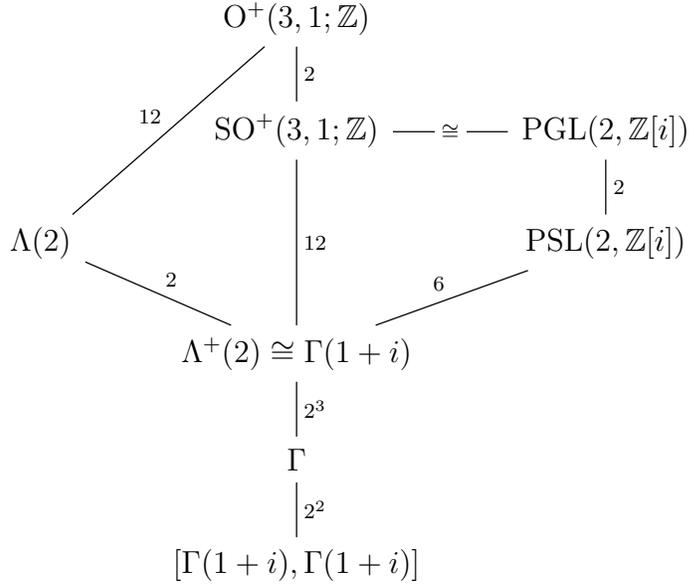
\begin{figure}
\begin{tikzcd}
& \Or^+(3,1;\Z) \arrow[d,"2",dash] \arrow[ldd,"12"',dash] & & \\
& \SO^+(3,1;\Z) \arrow[r,"\cong " description,dash] \arrow[dd,"12",dash] & \PGL(2,\Z[i]) \arrow[d,"2",dash] \\
\Lambda(2) \arrow[rd,"2",dash] & & \PSL(2,\Z[i]) \arrow[dl,"6"',dash] \\
& \Lambda^+(2) \cong \Gamma(1+i) \arrow[d,"2^3",dash] & \\
& \Gamma \arrow[d,"2^2",dash] & \\
& \left[ \Gamma(1+i),\Gamma(1+i) \right]  &
\end{tikzcd}
\caption{Lattice of Subgroups} \label{lattice}
\end{figure}
Putting all of this together we obtain the lattice of subgroups shown in \cref{lattice}.
We now have the image group $\Gamma=\pi_1(Y^+)$ as claimed. That the index in $\PSL(2,\Z[i])$ is $48$ is clear from the lattice of subgroups above (or from volume consideration discussed in \cite[p.108]{RT}).
 \end{proof}

\begin{rem}\label{iscommutator}
The subgroup $[\Gamma(1+i),\Gamma(1+i)]$ can also be identified as the principal congruence subgroup $\Gamma(2+2i)$ (i.e. those elements in $\PSL(2,\Z[i])$ congruent to the identity modulo the ideal $(2+2i)=(1+i)^3$).
This follows from \cite[Propositon 3.1]{BR} where the group $\Gamma(2+2i)$ is identified as a link group, and arises as the normal closure in $\PSL(2,\Z[i])$ of the subgroup $<t^2u^2,t^4>$. Now $t^2u^2,t^4\in \Gamma(1+i)$, and
since $\Gamma(1+i)^{\ab}\cong (\Z/2\Z)^5$, it follows that $t^2u^2$ and $t^4$ are both mapped trivially under the abelianization map $\Gamma(1+i)\rightarrow \Gamma(1+i)^{\ab}$. Hence
$<t^2u^2, t^4> \subset [\Gamma(1+i),\Gamma(1+i)]$. 

Now the subgroup $[\Gamma(1+i),\Gamma(1+i)]$ is a characteristic subgroup of $\Gamma(1+i)$ and hence is normal in $\PSL(2,\Z[i])$, and so it
follows that the normal closure in $\PSL(2,\Z[i])$ of the subgroup $<t^2u^2,t^4>$ is contained in $[\Gamma(1+i),\Gamma(1+i)]$; i.e. $\Gamma(2+2i) \subset [\Gamma(1+i),\Gamma(1+i)]$. However, both these groups have index
$192$ in $\PSL(2,\Z[i])$.\end{rem}

We now provide the short Magma \cite{Mag} routine referred to in the proof of \cref{inpicard}. Referring below, the group {\tt{g}} is the group $\PSL(2,\Z[i])$, and the presentation used is that given above. 
The group {\tt{N}} is $\Gamma(1+i)$. 

\medskip

\begin{verbatim}
> g<a,l,t,u>:=Group<a,l,t,u|a^2,l^2,(a*l)^2,(t*l)^2,
(u*l)^2,(a*t)^3,(u*a*l)^3,(t,u)>;
> h:=sub<g|t*u,t^2,u^2>;
> N:=NormalClosure(g,h);
> print Index(g,N);
6
> print AbelianQuotientInvariants(N); 
[ 2, 2, 2, 2, 2 ]
\end{verbatim}

\subsection{Generators for the fundamental groups of the cross-sections}\label{pres}
Using the description of each non-orientable cross-section from
\cref{Xsection}, the notation of \S \ref{RTbackground} together with the isomorphism described in the proof of \cref{inpicard}, we find generators for the fundamental groups of $A$, $B$, $C$, $D$ and $E$ as side pairings of the 24-cell. We then use Magma to find generators for the orientation double covers $A^+$, $B^+$, $C^+$, $D^+$ and $E^+$ and also to eliminate redundant generators. The generators are written below both as elements of $\PSL(2,\Z[i])$ and as words in the reflections $r_i$'s.

\begin{multline*} \pi_1(A^+)= \left\langle 
\begin{pmatrix} -i&-1+i \\ 1+i&-i \end{pmatrix},
\begin{pmatrix} 3i&1-i \\ 1+i&-i \end{pmatrix},
\begin{pmatrix} 1-2i&-2-4i \\ -2i&-3-2i \end{pmatrix}, \right.
\\ \left.
\begin{pmatrix} -1-4i&-4+4i \\ -2-2i&-1+4i \end{pmatrix},
\begin{pmatrix} -2-5i&-1+9i \\ -3-3i&2+7i \end{pmatrix}
 \right\rangle 
 \\ = \left\langle r_3r_6r_2r_1, r_1r_3r_6r_2, r_2r_1r_5r_3r_4r_3, r_2r_1r_5r_3r_2r_5r_3r_1, \right.
 \\ \left. r_2r_1r_5r_1r_6r_2r_3r_5r_1r_2 \right\rangle ,
\end{multline*}
\begin{multline*} \pi_1(B^+)= \left\langle 
\begin{pmatrix} 3&-1-i \\ 2-2i&-1 \end{pmatrix},
\begin{pmatrix} -2+i&1+i \\ 1+i&-i \end{pmatrix},
\begin{pmatrix} 1+2i&2-2i \\ 4i&5-2i \end{pmatrix}, \right.
\\ \left.
\begin{pmatrix} 1+2i&-1-i \\ 4+4i&-3-2i \end{pmatrix},
\begin{pmatrix} -2-i&1+i \\ -1-i&i \end{pmatrix}
 \right\rangle 
  \\ = \left\langle r_1r_2r_6r_2, r_3r_4r_3r_2, (r_5r_3r_1)^2, r_5r_3r_2r_6r_2r_1r_3r_5, r_5r_1r_4r_2r_1r_5 \right\rangle ,
\end{multline*}
\begin{multline*} \pi_1(C^+)= \left\langle 
\begin{pmatrix} 1&-1-i \\ 2-2i&-3 \end{pmatrix},
\begin{pmatrix} -1&1+i \\ -2&1+2i \end{pmatrix},
\begin{pmatrix} 3+2i&-2-2i \\ 2+2i&-1-2i \end{pmatrix}, \right. 
\\ \left.
\begin{pmatrix} 1&1+i \\ 0&1 \end{pmatrix}, 
\begin{pmatrix} -3&5-3i \\ -2&3-2i \end{pmatrix}
 \right\rangle 
  \\ = \left\langle r_2r_6r_2r_1, r_2r_5r_3r_1, (r_1r_4r_2)^2, r_1r_4r_6r_1r_4r_1, r_1r_4r_5r_3r_1r_2r_4r_1 \right\rangle ,
\end{multline*}
\begin{multline*} \pi_1(D^+)= \left\langle 
\begin{pmatrix} 1&-1-i \\ 2-2i&-3 \end{pmatrix},
\begin{pmatrix} -2+3i&-2i \\ -1+i&-i \end{pmatrix},
\begin{pmatrix} 2+i&-2 \\ 1-i&i \end{pmatrix},\right.
\\ \left.
\begin{pmatrix} 2+i&-2i \\ 3+i&-3i \end{pmatrix}, 
\begin{pmatrix} -3+2i&4 \\ -5+i&5+2i \end{pmatrix}
 \right\rangle 
  \\ = \left\langle r_2r_6r_2r_1, r_2r_3r_2r_3r_4r_3r_2r_1r_5r_2, r_2r_3r_2r_4r_2r_1r_5r_2, r_2r_5r_1r_2r_3r_2r_3r_4r_3r_2, \right.
  \\ \left. r_2r_5r_1r_2r_6r_2r_5r_1 \right\rangle ,
\end{multline*}
\begin{multline*} \pi_1(E^+)= \left\langle 
\begin{pmatrix} 1&-1-i \\ 2-2i&-3 \end{pmatrix},
\begin{pmatrix} 3&-2 \\ 2&-1 \end{pmatrix},
\begin{pmatrix} 1+2i&2-2i \\ 4i&5-2i \end{pmatrix},\right.
\\ \left.
\begin{pmatrix} -1+2i&2 \\ -4+2i&3+2i \end{pmatrix}, 
\begin{pmatrix} 3+2i&-1-i \\ 4+4i&-1-2i \end{pmatrix}
 \right\rangle 
  \\ = \left\langle  r_2r_6r_2r_1, r_4r_3r_2r_1r_3r_5, (r_5r_3r_1)^2, r_5r_3r_1r_4r_3r_2, r_5r_3r_1r_2r_6r_2r_3r_5 \right\rangle .
\end{multline*}

\section{Fuchsian subgroups of the Picard group}\label{Fuchsian}

To understand totally geodesic surfaces in $Y^+$,  we recall from \cite{MR0} that every non-elementary Fuchsian subgroup of $\PSL(2,\Z[i])$ fixes a circle or straight line $\mathcal{C}$
with an equation of the form:~$a|z|^2+\bar{B}z+B\bar{z}+c=0$, with $a,c \in \mathbb{Z}$ and $B \in \Z[i]$, and vice versa. We caution the reader that the normalization of the equation of the circle follows \cite{JR} (which we will use in the proof) rather than \cite{MR0}, the normalization of \cite{MR0} uses $Bz+\bar{B}\bar{z}$.

Two such circles (or straight-lines) $\mathcal{C}$ and $\mathcal{C}'$ are said to be {\em equivalent} if there exists 
$\gamma\in\PSL(2,\Z[i])$ such that $\gamma \mathcal{C}=\mathcal{C}'$.
Define $D=|B|^2-ac$ to be the {\em discriminant} of $\mathcal{C}$. This is preserved by the action of  $\PSL(2,\Z[i])$, and hence equivalent circles have the same discriminant (see \cite{MR0}).  If $\Delta <\PSL(2,\Z[i])$ is a torsion-free subgroup of finite index, then a $\Delta$-equivalence class of circles and straight-lines can be associated to a totally geodesic surface in $\mathbb{H}^3/\Delta$ and vice versa.  Hence
we can also refer to the discriminant of  the associated totally geodesic surface.
When $a\neq0$, $\mathcal{C}$ is a circle centered at $-B/a$, with radius $\sqrt{D}/|a|$.  This is the case when the totally geodesic surface is closed \cite[Lemma 3.1]{JR}.  Note that if the surface associated to a circle $\mathcal{C}$ is closed and embedded in $\mathbb{H}^3/\Delta$, then for every element $\delta\in\Delta$ we must have $\delta\mathcal{C}=\mathcal{C}$ or $\delta\mathcal{C}\cap\mathcal{C}=\emptyset$.

It is shown in \cite{MR0} that every circle (or straight-line) as above is $\PSL(2,\Z[i])$-equivalent to one of the following:

\begin{itemize}
\item $\mathcal{C}_D~:~|z|^2-D=0$,
\item $\mathcal{C}_{D,1}~:~2|z|^2+z+\bar{z}-\frac{D-1}{2}=0$ (when $D\equiv 1 \pmod{4}$),
\item $\mathcal{C}_{D,2}~:~2|z|^2+iz-i\bar{z}-\frac{D-1}{2}=0$ (when $D\equiv 1 \pmod{4}$), or
\item $\mathcal{C}_{D,3}~:~2|z|^2+(1+i)z+(1-i)\bar{z}-\frac{D-2}{2}=0$ (when $D\equiv 2 \pmod{4}$).
\end{itemize}
Note that the radius of the first circle listed above is $\sqrt{D}$ and for the others it is $\sqrt{D}/2$.

\begin{lemma}
\label{standard_embed}
Let $\Delta$ be a normal subgroup of finite index in $\PSL(2,\Z[i])$, and assume that $S\hookrightarrow \H^3/\Delta$ is an embedded totally geodesic surface associated to the circle $\mathcal{C}$ of discriminant $D$. Then there exists an embedded totally geodesic surface $S'$ associated to one of the circles $\mathcal{C}_D$ or $\mathcal{C}_{D,j}$ for one of $j=1,2,3$.\end{lemma}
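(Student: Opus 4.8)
The plan is to exploit the hypothesis that $\Delta$ is \emph{normal} in $\PSL(2,\Z[i])$ to transport the embedded surface $S$ across the $\PSL(2,\Z[i])$-equivalence provided by the classification of circles recalled just before the lemma.

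First I would fix a circle (or straight line) $\mathcal{C}$ representing the $\Delta$-equivalence class associated to $S$, so that $S$ is the image in $\H^3/\Delta$ of the totally geodesic plane $P_{\mathcal{C}}\subset\H^3$ with boundary $\mathcal{C}$, and so that (as recalled above) embeddedness of $S$ is equivalent to the dichotomy: $\delta\mathcal{C}=\mathcal{C}$ or $\delta\mathcal{C}\cap\mathcal{C}=\emptyset$ for every $\delta\in\Delta$. By the result of \cite{MR0} quoted before the statement, there is $\gamma\in\PSL(2,\Z[i])$ with $\gamma\mathcal{C}=\mathcal{C}'$, where $\mathcal{C}'$ is one of $\mathcal{C}_D,\mathcal{C}_{D,1},\mathcal{C}_{D,2},\mathcal{C}_{D,3}$ (which of these can occur being restricted by the residue of $D$ mod $4$); the discriminant is unchanged since it is a $\PSL(2,\Z[i])$-invariant.

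The key step is then that, because $\Delta\triangleleft\PSL(2,\Z[i])$, the element $\gamma$ normalizes $\Delta$ and hence descends to an isometry $\bar\gamma$ of $\H^3/\Delta$. Setting $S'=\bar\gamma(S)$, I would observe that $S'$ is the image in $\H^3/\Delta$ of $\gamma P_{\mathcal{C}}=P_{\mathcal{C}'}$, so $S'$ is a totally geodesic surface associated to $\mathcal{C}'$, and that $S'$ is embedded since $\bar\gamma$ is a homeomorphism of $\H^3/\Delta$ carrying $S$ homeomorphically onto $S'$. Equivalently, one can verify the embeddedness criterion for $\mathcal{C}'$ by hand: given $\delta'\in\Delta$, normality yields $\delta'=\gamma\delta\gamma^{-1}$ for some $\delta\in\Delta$, so $\delta'\mathcal{C}'=\gamma(\delta\mathcal{C})$, and applying $\gamma$ to the dichotomy for $\mathcal{C}$ gives $\delta'\mathcal{C}'=\mathcal{C}'$ or $\delta'\mathcal{C}'\cap\mathcal{C}'=\emptyset$. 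In either formulation this produces the desired $S'$.

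There is no substantial obstacle here: the sole place a hypothesis is genuinely used is in asserting that $\gamma$ descends to a self-map of $\H^3/\Delta$, which is precisely why normality of $\Delta$ is assumed. The role of the lemma is organizational — it reduces the subsequent analysis of closed embedded totally geodesic surfaces in the $Y^+$ to the finitely many standard circles $\mathcal{C}_D$ and $\mathcal{C}_{D,j}$, whose stabilizers in $\PSL(2,\Z[i])$, and hence the relevant surface groups inside $\Gamma$, can be written down explicitly for the criterion of \cite[Corollary 3.3]{JR}.
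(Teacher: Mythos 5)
Your proof is correct and follows essentially the same route as the paper: both arguments use normality of $\Delta$ to identify $\H^3/\gamma\Delta\gamma^{-1}$ with $\H^3/\Delta$ (equivalently, to descend $\gamma$ to an isometry of the quotient) and then transport the embedded surface along the $\PSL(2,\Z[i])$-equivalence from the classification of circles. Your explicit verification of the intersection dichotomy for $\mathcal{C}'$ is a slightly more detailed spelling-out of the same point.
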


\begin{proof} Observe that, since $\mathcal{C}$ is associated to an embedded totally geodesic surface in $\mathbb{H}^3/\Delta$,  for any $\alpha\in \PSL(2,\Z[i])$, the circle $\alpha \mathcal{C}$ is associated to an embedded totally geodesic surface in $\mathbb{H}^3/\alpha\Delta\alpha^{-1}$. Since $\Delta$ is assumed to be normal in $\PSL(2,\Z[i])$, the surface associated to $\alpha \mathcal{C}$ is actually embedded in $\H^3/\Delta$.

From the classification of circles given above, there is  $\alpha\in \PSL(2,\Z[i])$ such that $\alpha \mathcal{C}$ is one of $\mathcal{C}_D$ or $\mathcal{C}_{D,j}$ for one of $j=1,2,3$. The result follows.\end{proof}

Associated to the circle $\mathcal{C}$ with equation $a|z|^2+\bar{B}z+B\bar{z}+c=0$ as above, is the Hermitian matrix 
$A=\left(
\begin{array}{cc}
 a & B \\
 \overline{B} & c \\
\end{array}
\right)$ 
with an action of $\PSL(2,\Z[i])$ given by $\gamma^* A \gamma$, where $*$ denotes conjugate-transpose and the given action sends $\mathcal{C}$ to $\gamma^{-1} \mathcal{C}$. 
Here $\mathcal{C}$ is the set of all points $z\in\mathbb{C}$ such that $A\begin{pmatrix}z\\1 \end{pmatrix}\cdot\begin{pmatrix}\overline{z}\\1 \end{pmatrix}=0$.
Now \cite[Corollary 3.3]{JR} provides a criterion for a totally geodesic surface $S$ associated to a circle $\mathcal{C}$ to be embedded in $\mathbb{H}^3/\Delta$ or not; namely if $\gamma\in\Delta$ does not leave $\mathcal{C}$ invariant and satisfies:
\[
\left|\mathrm{tr}(\gamma^*A\gamma A^{-1})\right| < 2,
\]
then $\gamma^{-1} \mathcal{C} \cap  \mathcal{C}\neq\emptyset$ and $S$ is not embedded. Furthermore, if $S$ is closed, and $\gamma$ does not leave $\mathcal{C}$ invariant, then $\gamma^{-1} \mathcal{C} \cap  \mathcal{C}$ is two points.

We will make use of the following lemma.

\begin{lemma}
\label{onlycircle}
Let $\Delta$ be a subgroup of finite index in $\PSL(2,\Z[i])$ which contains the group $[\Gamma(1+i),\Gamma(1+i)]$, and let $M=\H^3/\Delta$. If $S\hookrightarrow M$ is a closed embedded totally geodesic surface (not necessarily orientable) associated to a circle
$\mathcal{C}$, then there exists $\alpha\in \PSL(2,\Z[i])$ such that $\alpha \mathcal{C} =  \mathcal{C}_{6,3}$.\end{lemma}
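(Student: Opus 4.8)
The plan is to replace $\Delta$ by the (normal) subgroup $[\Gamma(1+i),\Gamma(1+i)]$, normalize $\mathcal{C}$ to one of the standard circles of \cite{MR0} using \cref{standard_embed}, and then eliminate every standard circle except $\mathcal{C}_{6,3}$ by means of the single parabolic element $t^{2}u^{2}$. First, since $[\Gamma(1+i),\Gamma(1+i)]$ is normal in $\PSL(2,\Z[i])$ and is contained in $\Delta$, the covering $\H^{3}/[\Gamma(1+i),\Gamma(1+i)]\to M$ pulls $S$ back to an embedded totally geodesic surface, a component of which is a closed embedded totally geodesic surface associated to $\mathcal{C}$; so I may assume $\Delta=[\Gamma(1+i),\Gamma(1+i)]$, which is normal. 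By \cref{standard_embed} and the classification recalled above, there is $\alpha\in\PSL(2,\Z[i])$ with $\alpha\mathcal{C}$ equal to one of $\mathcal{C}_{D},\mathcal{C}_{D,1},\mathcal{C}_{D,2},\mathcal{C}_{D,3}$, where $D$ is the discriminant of $\mathcal{C}$, and with associated surface $S'$ embedded in $\H^{3}/\Delta$. It then suffices to show that this standard circle must be $\mathcal{C}_{6,3}$.

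Second, I would pin down the admissible $D$. Because the surface we reduced to is closed, the group $\mathrm{Stab}_{\Delta}(P_{\mathcal{C}})$ is cocompact on the plane $P_{\mathcal{C}}$ and hence contains no parabolic element; equivalently $\mathcal{C}$ meets no parabolic fixed point of $\Delta$, and since $\Delta$ has finite index these are exactly the points of $\Q(i)\cup\{\infty\}$. Thus the Hermitian form of $\mathcal{C}$ is anisotropic over $\Q(i)$, and since its determinant is $-D$, the integer $D$ is not a norm from $\Q(i)$, i.e. $D$ is not a sum of two integer squares (equivalently, this is the standard criterion from \cite{MR0} for the associated surface to be closed). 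In particular $D\geq 3$, and when $\alpha\mathcal{C}=\mathcal{C}_{D,j}$ one has in addition $D\equiv 1\pmod 4$ (for $j=1,2$) or $D\equiv 2\pmod 4$ (for $j=3$); an elementary check of these congruence classes shows that the least $D$ which is not a sum of two squares is $21$ in the first family and $6$ (realized by $\mathcal{C}_{6,3}$ itself) in the second.

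Third is the main step. Let $\gamma=t^{2}u^{2}=\left(\begin{smallmatrix}1&2+2i\\0&1\end{smallmatrix}\right)$; by \cref{iscommutator} we have $\gamma\in[\Gamma(1+i),\Gamma(1+i)]=\Delta$. A direct computation with the Hermitian matrix $A'$ of the standard circle $\alpha\mathcal{C}$, whose leading coefficient $a$ is $1$ (for $\mathcal{C}_{D}$) or $2$ (for $\mathcal{C}_{D,j}$), gives $\mathrm{tr}\!\left(\gamma^{*}A'\gamma(A')^{-1}\right)=2-\dfrac{a^{2}|2+2i|^{2}}{D}=2-\dfrac{8a^{2}}{D}$, which has absolute value $<2$ precisely when $D>2a^{2}$ — that is, for every $\mathcal{C}_{D}$ (since $D\geq 3$) and for every $\mathcal{C}_{D,j}$ with $D\geq 9$. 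By the previous paragraph these are exactly the standard circles carrying closed surfaces other than $\mathcal{C}_{6,3}$. Since $\gamma$ plainly does not leave $\alpha\mathcal{C}$ invariant (it moves the center of a genuine circle), \cite[Corollary 3.3]{JR} then forces $S'$ to be non-embedded in each such case, a contradiction. Hence $\alpha\mathcal{C}=\mathcal{C}_{6,3}$, as required. (Geometrically the trace formula just records that the translate of $\alpha\mathcal{C}$ by $2+2i$, a circle of the same radius $\rho\in\{\sqrt D,\sqrt D/2\}$ whose center has moved a Euclidean distance $2\sqrt 2$, crosses $\alpha\mathcal{C}$ transversely exactly when $2\sqrt 2<2\rho$.)

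The step I expect to be the main obstacle is the bookkeeping in the third paragraph: one must be certain that $\mathcal{C}_{6,3}$ is the unique standard circle that can carry a closed surface ($D$ not a sum of two squares) while escaping the trace test ($D\leq 2a^{2}$). This rests entirely on the congruence restrictions $D\equiv 1,2\pmod 4$ for the circles $\mathcal{C}_{D,j}$ together with the computation of the least admissible $D$ in each family; absent those restrictions one would instead have to discard the small circles $\mathcal{C}_{1},\mathcal{C}_{2},\mathcal{C}_{1,1},\mathcal{C}_{1,2},\mathcal{C}_{5,1},\mathcal{C}_{5,2},\mathcal{C}_{2,3}$ by hand (each passes through a point of $\Q(i)$, so its associated surface is non-compact and cannot equal $S$). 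A secondary point requiring care is that the trace test needs the strict inequality $|\mathrm{tr}|<2$, so that tangency (which would give $|\mathrm{tr}|=2$ and be inconclusive) is genuinely excluded by $D>2a^{2}$ being strict.
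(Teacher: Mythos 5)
Your proposal is correct and follows essentially the same route as the paper: reduce to the normal subgroup $\Gamma(2+2i)=[\Gamma(1+i),\Gamma(1+i)]$, normalize to a standard circle via \cref{standard_embed}, apply the \cite{JR} trace test with the parabolic $t^2u^2=\left(\begin{smallmatrix}1&2+2i\\0&1\end{smallmatrix}\right)$ (your condition $D>2a^2$ is exactly the paper's radius bound $>\sqrt{2}$), and discard the surviving small circles as non-cocompact. The only difference is cosmetic: you make the trace computation explicit and use the sum-of-two-squares criterion where the paper cites \cite[Lemma 8]{MR0}.
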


\begin{proof} 
Recall from \cref{iscommutator} that $[\Gamma(1+i),\Gamma(1+i)]=\Gamma(2+2i)$, and so $\Gamma(2+2i) \subset \Delta$ by hypothesis. Therefore $S$ gives rise to a closed embedded totally geodesic surface associated to the 
circle $\mathcal{C}$ in the cover $\mathbb{H}^3/\Gamma(2+2i)$.  Assuming that $\mathcal{C}$ has discriminant $D$, there exists $\alpha\in \PSL(2,\Z[i])$ such that
$\alpha \mathcal{C}$ is one of $\mathcal{C}_D$ or $\mathcal{C}_{D,j}$ for $j=1,2,3$ 

Now $\Gamma(2+2i)\triangleleft \PSL(2,\Z[i])$, and \cref{standard_embed} shows that one of 
$\mathcal{C}_D$ or $\mathcal{C}_{D,j}$ for $j=1,2,3$ also gives rise to a closed embedded totally geodesic surface in $\mathbb{H}^3/\Gamma(2+2i)$.

Now the element $\begin{pmatrix}
 1 & 2+2 i \\
 0 & 1
\end{pmatrix} \in\Gamma(2+2i)$, and using this element, a simple calculation shows that for the surface to be embedded, the radius of the associated circle must be $\leq \sqrt{2}$.  
From above, the radii of the circles $\mathcal{C}_D$ or $\mathcal{C}_{D,j}$ for $j=1,2,3$ is $\sqrt{D}$ or $\sqrt{D}/2$. Hence, amongst the circles $\mathcal{C}_D$ or $\mathcal{C}_{D,j}$, $j=1,2,3$, the only possibilities are:
\[\mathcal{C}_1,\mathcal{C}_2, \mathcal{C}_{1,1}, \mathcal{C}_{1,2}, \mathcal{C}_{2,3}, \mathcal{C}_{5,1}, \mathcal{C}_{5,2}, \mathcal{C}_{6,3}\]
and the only one of these that can give rise to a {\em closed} surface is $\mathcal{C}_{6,3}$ (see \cite[Lemma 8]{MR0}).  

The upshot of this discussion is that if $S\hookrightarrow M$ is a closed embedded totally geodesic surface with associated circle $\mathcal{C}$, then there exists $\alpha\in \PSL(2,\Z[i])$ such that $\mathcal{C}=\alpha\mathcal{C}_{6,3}$.\end{proof}

The proof of Lemma \ref{noclosedinNon} will be completed in the sections below.  To that end we make some additional comments and introduce some notation.  From Proposition \ref{inpicard} and \cref{iscommutator}, we need to consider certain groups $\Delta$ with  $\Gamma(2+2i) \triangleleft \Delta \triangleleft  \Gamma(1+i)$.  

A complete system of (left  or right) coset representatives for $\Gamma(1+i)$ in $\PSL(2,\Z[i])$ is provided by the following $6$ matrices:
\[T_0=\rm{id},
T_1=\begin{pmatrix}
 0 & 1 \\
 -1 & 0 \end{pmatrix},
T_2=\begin{pmatrix}
 1 & -1 \\
 0 & 1 \end{pmatrix},\]
\[T_3=\begin{pmatrix}
 1 & 0 \\
 -1 & 1 \end{pmatrix},
T_4=\begin{pmatrix}
 1 & 1 \\
 -1 & 0 \end{pmatrix},
T_5=\begin{pmatrix}
 0 & -1 \\
 1 & 1 \end{pmatrix}.\]
Using this and the normality of $\Delta$ in $\Gamma(1+i)$ (see proof of \cref{standard_embed}), it follows that if  $\mathcal{C}=\alpha\mathcal{C}_{6,3}$ corresponds to a closed embedded totally geodesic surface in $\H^3/\Delta$ then one of 
$T_i\mathcal{C}_{6,3}$ also corresponds to such a surface.  
Briefly, since $\mathcal{C}$ corresponds to a closed embedded totally geodesic surface in $\H^3/\Delta$, $\mathcal{C}_{6,3}=\alpha^{-1}\mathcal{C}$ corresponds to a closed embedded totally geodesic surface in $\H^3/\alpha^{-1}\Delta\alpha$.
Writing $\alpha=\gamma T_i$ for some $\gamma\in \Gamma(1+i)$, and using $\Delta \triangleleft \Gamma(1+i)$ we deduce that $\mathcal{C}_{6,3}$ corresponds to a closed embedded totally geodesic surface in $\H^3/T_i^{-1}\Delta T_i$
from which it follows that $T_i\mathcal{C}_{6,3}$ corresponds to a closed embedded totally geodesic surface in $\H^3/\Delta$.

Using the action on the Hermitian forms described above, the action by the matrices $T_i$ is given by $(T_i^{-1})^*AT_i^{-1}$, which, since the entries of the matrices $T_i$ are integers, is simply $(T_i^{-1})^tAT_i^{-1}$. Hence,
the circles $T_i\mathcal{C}_{6,3}$ for $i=0,1,\ldots ,5$ are represented by the matrices: 
\[A=A_0=\begin{pmatrix}
 2 & 1-i \\
 1+i & -2 \end{pmatrix},
A_1=\begin{pmatrix}
 -2 & -1-i \\
 -1+i & 2 \end{pmatrix},\]
\[A_2=\begin{pmatrix}
 2 & 3-i \\
 3+i & 2 \end{pmatrix},
A_3=\begin{pmatrix}
 2 & -1-i \\
 -1+i & -2 \end{pmatrix},\]
\[A_4=\begin{pmatrix}
 -2 & -3-i \\
 -3+i & -2 \end{pmatrix},
A_5=\begin{pmatrix}
 -2 & 1-i \\
 1+i & 2 \end{pmatrix}.\]

\section{Proving no closed embedded totally geodesic surfaces} 
\label{ss1092}

From Lemma \ref{Xsection} to prove \cref{noclosedinNon} (i.e. that the link complements $W_{23}$, $W_{71}$, $W_{1091}$ and $W_{1092}$ do not contain a closed embedded totally geodesic hyperbolic $3$-manifold), we are reduced to showing that the hyperbolic $3$-manifolds $A^+$, $B^+$, $C^+$, $D^+$ and $E^+$ do not contain a closed embedded totally geodesic surface (which could be non-orientable). 

In what follows in each of the subsections below we list elements of the groups $\pi_1(A^+)$, $\pi_1(B^+)$, $\pi_1(C^+)$, $\pi_1(D^+)$ and $\pi_1(E^+)$ that provide self-intersections of the circles 
$T_i\mathcal{C}_{6,3}$. This is done using the matrix generators for each of 
$\pi_1(A^+)$, $\pi_1(B^+)$, $\pi_1(C^+)$, $\pi_1(D^+)$ and $\pi_1(E^+)$ listed in \S \ref{pres} and the  criteria of \cite{JR} stated
\S \ref{Fuchsian}:
\[
\left|\mathrm{tr}(\gamma^*A_i\gamma A_i^{-1})\right| < 2.
\]
These calculations were performed in Mathematica \cite{Math} and the notebook is available from the authors upon request. 
For convenience, we shall simply denote the generators for each of  the groups $\pi_1(A^+)$, $\pi_1(B^+)$, $\pi_1(C^+)$, $\pi_1(D^+)$ and $\pi_1(E^+)$ in \S \ref{pres} by $g_1,g_2,\ldots ,g_5$ in the order that they are listed.  What is listed below are the Hermitian forms $A=A_0,A_1, \ldots , A_5$ and
those elements $\gamma$, written in terms of $g_1,g_2,\ldots ,g_5$, for which $\left|\mathrm{tr}(\gamma^*A_i\gamma A_i^{-1})\right| < 2$. 

We will also make use of the element 
$l = \begin{pmatrix}
 i & 0 \\
 0 & -i \end{pmatrix},$ which being an element of $\Gamma(1+i)$ normalizes each of the groups $\pi_1(A^+)$, $\pi_1(B^+)$, $\pi_1(C^+)$, $\pi_1(D^+)$ and $\pi_1(E^+)$ by \cref{inpicard}.
Additional explanation of elements that are not visibly in the groups $\pi_1(A^+)$, $\pi_1(B^+)$, $\pi_1(C^+)$, $\pi_1(D^+)$ and $\pi_1(E^+)$ is provided when needed.

Finally, we remark that we also need to ensure that the elements do not leave the circles in question invariant.  This is clear if the elements are parabolic (since the surface is closed) and when the trace is a non-real complex number that is not purely imaginary.  In the cases where the element has trace that is pure imaginary we 
check to see whether the circle is left invariant. \\[\baselineskip]
%
%
\noindent{\bf The manifold $A^+$:}

\begin{table}[ht]
\begin{center}
\begin{tabular}{|c|c|c|}
\hline
Circle/Form&Element&$|\hbox{Trace Value}|$\\
\hline
$A$&$g_3$&$2/3$\\
\hline
$A_1$&$g_1$&$2/3$\\
\hline
$A_2$&$g_1g4$&$2/3$\\
\hline
$A_3$&$g_1$&$2/3$\\
\hline
$A_4$&$l\beta l^{-1}$&$2/3$\\
\hline
$A_5$&$g_3$&$2/3$\\
\hline
\end{tabular}
\end{center}
\end{table}

\noindent Note that the element $g_1$ has trace $-2i$. However, a calculation shows that $g_1$ does not leave invariant any of the circles $T_i\mathcal{C}_{6,3}$ for $i=0, \dots ,5$.  
That $\beta\in \pi_1(A^+)$, can be checked by noting that 
\begin{align*}
g_2^{-1}g_1g_3\beta &= \begin{pmatrix}
 5+8 i & 10-2 i \\
 18+6 i & 13-16 i \end{pmatrix} \\
&= \begin{pmatrix}  1 + 4(1+2 i )& (2+2i)(2-3i) \\
 (2+2i)(6-3i) & 1+ 4(3-4 i) \end{pmatrix}
 \end{align*} 
is in $\Gamma(2+2i)$. By \cref{iscommutator} and \cref{inpicard}, $\Gamma(2+2i)=[\Gamma(1+i),\Gamma(1+i)]<\pi_1(A^+)$.

None of the elements $g_3$, $g_1g_4$ and $\beta$ (and hence also $l\beta l^{-1}\in \pi_1(A^+)$) have  purely imaginary trace.\\[\baselineskip]
\noindent{\bf The manifold $B^+$:}

\begin{table}[ht]
\begin{center}
\begin{tabular}{|c|c|c|}
\hline
Circle/Form&Element&$|\hbox{Trace Value}|$\\
\hline
$A$&$g_1$&$2/3$\\
\hline
$A_1$&$g_1$&$2/3$\\
\hline
$A_2$&$g_2$&$2/3$\\
\hline
$A_3$&$g_2$&$2/3$\\
\hline
$A_4$&$l g_1l^{-1}$&$2/3$\\
\hline
$A_5$&$g_1$&$2/3$\\
\hline
\end{tabular}
\end{center}
\end{table}

\noindent From \S \ref{pres}, $g_1$ and $g_2$ are both parabolic, and as above $l g_1l^{-1}\in \pi_1(B^+)$.\\[\baselineskip]
\noindent{\bf The manifold $C^+$:}~In this case the parabolic element $g_4$ works for all the forms $A, A_1,\ldots ,A_5$ with trace value $2/3$.\\[\baselineskip]
%
%
\noindent{\bf The manifold $D^+$:}~

\begin{table}[ht]
\begin{center}
\begin{tabular}{|c|c|c|}
\hline
Circle/Form&Element&$|\hbox{Trace Value}|$\\
\hline
$A$&$g_1$&$2/3$\\
\hline
$A_1$&$g_1$&$2/3$\\
\hline
$A_2$&$l g_1l^{-1}$&$2/3$\\
\hline
$A_3$&$l g_1l^{-1}$&$2/3$\\
\hline
$A_4$&$l g_1l^{-1}$&$2/3$\\
\hline
$A_5$&$l g_1l^{-1}$&$2/3$\\
\hline
\end{tabular}
\end{center}
\end{table}

\noindent From \S \ref{pres}, $g_1$ is parabolic, and as noted above $l g_1l^{-1}\in \pi_1(D^+)$.\\[\baselineskip]
\noindent{\bf  The manifold $E^+$:}~Since the parabolic element $g_1\in\pi_1(E^+)$ is exactly the same as for $\pi_1(D^+)$, the same table holds for $E^+$ as that shown for $D^+$.

\begin{rem}\label{final} From \cref{iscommutator} and \cref{inpicard} we know that $\H^3/\Gamma(2+2i)$ covers each of the manifolds 
$A^+$, $B^+$, $C^+$, $D^+$, and $E^+$, which we have shown do not contain a closed embedded totally geodesic surface. On the other hand, as pointed out in \cite{JR} the link complement $\H^3/\Gamma(2+2i)$ does contain a closed totally geodesic surface of genus $3$ associated to $\mathcal{C}_{6,3}$.\end{rem}




\end{document}